\documentclass{article}
\usepackage{amssymb,amsbsy}
\usepackage{amsmath}
\usepackage{amsthm}
\usepackage{mathrsfs}
\usepackage{color}
\usepackage[dvipsnames]{xcolor}
\usepackage{tikz}

\righthyphenmin=2

\newtheorem{theorem}{Theorem}[section]
\newtheorem{lemma}[theorem]{Lemma}
\newtheorem{proposition}[theorem]{Proposition}
\newtheorem{corollary}[theorem]{Corollary}

\theoremstyle{definition}

\newtheorem{example}[theorem]{Example}

\theoremstyle{remark}
\newtheorem{remark}[theorem]{Remark}


\begin{document}
	
	\title{Numerical semigroups with monotone Ap\'ery set and fixed multiplicity and ratio}
	
	\author{Aureliano M. Robles-P\'erez\thanks{Departamento de Matem\'atica Aplicada \& Instituto de Matem\'aticas (IMAG), Universidad de Granada, 18071-Granada, Spain. \newline E-mail: \textbf{arobles@ugr.es} (\textit{corresponding author}); ORCID: \textbf{0000-0003-2596-1249}.}
		\mbox{ and} Jos\'e Carlos Rosales\thanks{Departamento de \'Algebra \& Instituto de Matem\'aticas (IMAG), Universidad de Granada, 18071-Granada, Spain. \newline E-mail: \textbf{jrosales@ugr.es}; ORCID: \textbf{0000-0003-3353-4335}.} }
	
	\date{\today}
	
	\maketitle
	
	\begin{abstract}
		We characterise the numerical semigroups with a monotone Ap\'ery set (MANS-semigroups for abbreviate). Moreover, we describe the families of MANS-semigroups when we set the multiplicity and the ratio.
	\end{abstract}
	\noindent {\bf Keywords:} Numerical semigroup, multiplicity, ratio, Frobenius number, monotone Ap\'ery set, suitably monotone element.
	
	\medskip
	
	\noindent{\it 2010 AMS Classification:} 20M14, 11D07.  	
	
	\section{Introduction}
	
	Let $ {Z}=\{0,\pm1,\pm2,\ldots\}$ the set of integer numbers and $\mathbb{N}=\{z\in\mathbb{Z} \mid z\geq0\}$. A \textit{numerical semigroup} is a subset $S$ of $\mathbb{N}$ such that it is closed under addition, $0\in S$, and $\mathbb{N}\setminus S = \{x\in\mathbb{N} \mid x\not\in S\}$ is finite.
	
	If $A$ is a non-empty subset of $\mathbb{N}$, then we denote by $\langle A \rangle$ the submonoid of $(\mathbb{N},+)$ generated by $A$, that is,
	\[ \langle A \rangle=\big\{\lambda_1a_1+\cdots+\lambda_na_n \mid n\in\mathbb{N}\setminus \{0\}, \ a_1,\ldots,a_n\in A, \ \lambda_1,\ldots,\lambda_n\in \mathbb{N}\big\}. \]
	From Lemma~2.1 of \cite{springer}, we have that $\langle A \rangle$ is a numerical semigroup if and only if $\gcd(A)=1$.
	
	If $S$ is a numerical semigroup and $S=\langle A \rangle$, then we say that $A$ is a \textit{system of generators of $S$}. Moreover, if $S\neq\langle B \rangle$ for any subset $B\subsetneq A$, then $A$ is a \textit{minimal system of generators for $S$}. From Theorem~2.7 of \cite{springer}, we have that each numerical semigroup admits a unique minimal system of generators and that such a system is finite. We denote by $\mathrm{msg}(S)$ the minimal system of generators of $S$. The cardinality of $\mathrm{msg}(S)$, denoted by $\mathrm{e}(S)$, is the \textit{embedding dimension of $S$}.
	
	If $S$ is a numerical semigroup, from the finiteness of $\mathbb{N}\setminus S$, then we can define two invariants of $S$. Namely, the \textit{Frobenius number of $S$} is the greatest integer that does not belong to $S$, denoted by $\mathrm{F}(S)$, and the \textit{genus of $S$} is the cardinality of $\mathbb{N}\setminus S$, denoted by $\mathrm{g}(S)$.
	
	The (extended) Frobenius problem (see \cite{alfonsin}) for a numerical semigroup $S$ consists of finding formulas to compute $\mathrm{F}(S)$ and $\mathrm{g}(S)$ in terms of $\mathrm{msg}(S)$. Its solution is well known for numerical semigroups with embedding dimension $\mathrm{e}(S)=2$ (see \cite{sylvester}). However, the problem is open for $e(S)\geq3$. In fact, in \cite{curtis}, it is proved that, in general, there is not possible to find polynomial formulas when $e(S)\geq3$.
	
	Let $S$ be a numerical semigroup and $n\in S\setminus\{0\}$. The \textit{Ap\'ery set of $n$ in $S$} (named so after \cite{apery}) is the set $\mathrm{Ap}(S,n)=\{s\in S \mid s-n\not\in S\}$. In Lemma~2.4 of \cite{springer}, it is shown that $\mathrm{Ap}(S,n)=\{w(0)=0, w(1), \ldots, w(n-1)\}$, where $w(i)$ is the least element of $S$ congruent with $i$ modulo $n$.
	
	Recall that if $S$ is a numerical semigroup, the least element of $S\setminus\{0\}$ (equivalently, the minimum of $\mathrm{msg}(S)$) is called the \textit{multiplicity of $S$}, denoted by $\mathrm{m}(S)$. Now, following the notation introduced in \cite{checa}, $S$ is a \textit{numerical semigroup with monotone Ap\'ery set} (\textit{MANS-semigroup} for abbreviate) if $S$ is a numerical semigroup fulfilling $w(1)<w(2)<\cdots<w(\mathrm{m}(S)-1)$, where $w(i)$ is the least element of $S$ congruent with $i$ modulo $\mathrm{m}(S)$.
	
	In \cite{checa}, the authors study some families of numerical semigroups with monotone Ap\'ery sets and fixed multiplicity. This work aims to characterise the family of MANS-semigroups.
	
	Firstly, in Section~\ref{sect-two-dim}, we see a necessary condition that, in particular, it is sufficient for the two-embedding dimensional case.
	
	Then, in Section~\ref{sect-three-dim}, we analyse the MANS-semigroups with embedding dimension equal to three in detail. Thus, in Subsection~\ref{generators}, we characterise the tuples $(n_1, n_2, n_3)$ such that $\langle\{n_1,n_2,n_3\}\rangle$ is a MANS-semigroup. Furthermore, we solve the (extended) Frobenius problem for those semigroups in Subsection~\ref{frobenius} and, in Subsections~\ref{pseudofrobenius} and \ref{irreducible}, we study pseudo-Frobenius numbers and MANS-semigroups, with embedding dimension equal to three, that are irreducible (recall that a numerical semigroup is irreducible if it cannot be expressed as the intersection of two numerical semigroups containing it properly).
	
	Finally, in Section~\ref{sect-general}, we study the MANS-semigroups with arbitrary embedding dimension, giving a characterisation of them and, in Subsection~\ref{tree}, describing the tree associated with the family of numerical semigroups with fixed multiplicity and ratio (that is, when we set the two least minimal generators).

	\section{Two-embedding dimensional case}\label{sect-two-dim}
	
	If $S$ is a numerical semigroup and $\mathrm{msg}(S)=\{n_1<n_2<\cdots<n_e\}$, then $\mathrm{m}(S)=n_1$, $\mathrm{r}(S)=n_2$ and $\mathrm{M}(S)=n_e$ are the multiplicity, the \textit{ratio} and the \textit{maximum minimal generator of $S$}, respectively. In particular, if $S$ is a two-dimensional numerical semigroup, then $\mathrm{msg}(S)=\{\mathrm{m}(S) < \mathrm{r}(S)\}$ and $ \mathrm{r}(S) = \mathrm{M}(S)$.
	
	Let us see two necessary conditions for MANS-semigroups. The first is a direct consequence of a well-known fact: $\{n_2,\ldots,n_e\} \subseteq\mathrm{Ap}(S,n_1)$.
	
	\begin{lemma}\label{lem-000}
		Let $S=\langle n_1<n_2<\cdots<n_e \rangle $ be a numerical semigroup with $n_1\geq 2$. If $S$ is a MANS-semigroup, then $n_2 \bmod n_1 < n_3 \bmod n_1 < \cdots < n_e \bmod n_1$.
	\end{lemma}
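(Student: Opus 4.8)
The plan is to leverage the containment recalled just before the statement, namely $\{n_2,\ldots,n_e\}\subseteq\mathrm{Ap}(S,n_1)$, and then translate the monotonicity of the Ap\'ery set directly into monotonicity of the residues $n_j\bmod n_1$.

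First I would confirm the containment for completeness. If some $n_j$ with $j\geq2$ satisfied $n_j-n_1\in S$, then $n_j=n_1+(n_j-n_1)$ would write $n_j$ as a sum of two nonzero elements of $S$ that are both strictly smaller than $n_j$; expanding $n_j-n_1$ in terms of $\mathrm{msg}(S)$ would then express $n_j$ through generators other than itself, contradicting its minimality. Hence $n_j-n_1\notin S$, that is, $n_j\in\mathrm{Ap}(S,n_1)$.

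Next, set $r_j=n_j\bmod n_1$ for $j\in\{2,\ldots,e\}$. Since the elements of $\mathrm{Ap}(S,n_1)$ are exactly $w(0),w(1),\ldots,w(n_1-1)$ and each $w(i)$ is the unique element of the Ap\'ery set congruent to $i$ modulo $n_1$, the containment gives $n_j=w(r_j)$. Moreover $r_j\neq0$, since $r_j=0$ would make $n_j$ a nonzero multiple of the generator $n_1$ and hence not a minimal generator; thus $r_j\in\{1,\ldots,n_1-1\}$.

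Finally I would invoke the MANS hypothesis, which is precisely the assertion that $i\mapsto w(i)$ is strictly increasing on $\{1,\ldots,n_1-1\}$. From $n_2<n_3<\cdots<n_e$ together with $n_j=w(r_j)$ I obtain $w(r_2)<w(r_3)<\cdots<w(r_e)$, and strict monotonicity of $w$ forces $r_2<r_3<\cdots<r_e$, which is exactly the desired inequality. The argument is short and essentially a matter of unwinding definitions; the only step carrying any content is the containment in the first paragraph, and since the paper flags it as well known I do not anticipate a genuine obstacle.
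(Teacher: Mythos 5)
Your proof is correct and follows exactly the route the paper intends: the paper gives no separate argument for this lemma, stating it as a direct consequence of the containment $\{n_2,\ldots,n_e\}\subseteq\mathrm{Ap}(S,n_1)$, which is precisely what you establish and then unwind via $n_j=w(n_j\bmod n_1)$ and the strict monotonicity of $i\mapsto w(i)$. You merely fill in the details the paper leaves implicit, so there is nothing to criticise.
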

	
	\begin{lemma}\label{lem-001}
		Let $S$ be a numerical semigroup with $\mathrm{m}(S)\geq 2$. If $S$ is a MANS-semigroup, then there exists $a\in\mathbb{N}\setminus\{0\}$ such that $\mathrm{r}(S)=a\mathrm{m}(S)+1$.
	\end{lemma}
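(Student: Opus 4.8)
The plan is to show that the ratio $\mathrm{r}(S)=n_2$ coincides with $w(1)$, the least element of $S$ congruent to $1$ modulo $m:=\mathrm{m}(S)$. Since $w(1)\equiv 1\pmod m$ holds by the very definition of $w(1)$, this equality immediately forces $\mathrm{r}(S)\equiv 1\pmod m$, and then $a=(\mathrm{r}(S)-1)/m$ will be the integer we seek.

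First I would establish a general fact, not using the MANS hypothesis: every element of $S$ strictly between $0$ and $n_2$ is a multiple of $m$. Indeed, writing such an element as a nonnegative combination of the minimal generators $n_1=m<n_2<\cdots<n_e$, it cannot involve any $n_j$ with $j\geq 2$, for that would make it at least $n_2$; hence it is a multiple of $n_1=m$. As a consequence, $n_2=\min\{s\in S \mid m\nmid s\}$: the generator $n_2$ is itself not a multiple of $m$ (otherwise it would be generated by $m$ alone, contradicting that it is a minimal generator), while nothing smaller in $S$ escapes being a multiple of $m$.

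Second, I would invoke the MANS property to identify $w(1)$ with this same quantity. For each residue $i\in\{1,\ldots,m-1\}$, the value $w(i)$ is by definition the least element of $S$ lying in the congruence class $i$ modulo $m$; therefore $\min\{s\in S \mid m\nmid s\}=\min\{w(1),\ldots,w(m-1)\}$, and the monotonicity $w(1)<w(2)<\cdots<w(m-1)$ makes this minimum equal to $w(1)$. This is the step where the MANS hypothesis is genuinely used, and I expect it to carry the only real subtlety of the argument: without monotonicity the least non-multiple of $m$ in $S$ need not sit in the residue class $1$, and the conclusion would fail.

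Combining the two descriptions yields $n_2=w(1)$, whence $\mathrm{r}(S)=w(1)\equiv 1\pmod m$. Finally, since $\mathrm{r}(S)=n_2>n_1=m\geq 2$, the integer $a=(\mathrm{r}(S)-1)/m$ satisfies $a\geq 1$, so $a\in\mathbb{N}\setminus\{0\}$ and $\mathrm{r}(S)=a\,m+1=a\,\mathrm{m}(S)+1$, as required.
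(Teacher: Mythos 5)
Your proof is correct and follows essentially the same route as the paper's: identify $\mathrm{r}(S)$ with the least non-zero element of $\mathrm{Ap}(S,\mathrm{m}(S))$ (your ``least element of $S$ not divisible by $\mathrm{m}(S)$'' is the same quantity), then use monotonicity to conclude this element is $w(1)\equiv 1\pmod{\mathrm{m}(S)}$. The paper compresses the first step into ``from the definitions''; you have simply supplied the details it leaves implicit.
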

	
	\begin{proof}
		From the definitions, we have that $\mathrm{r}(S)=\min(\mathrm{Ap}(S,\mathrm{m}(S)) \setminus \{0\})$, and since $S$ is a MANS-semigroup, we deduce there exists $a\in\mathbb{N}\setminus\{0\}$ such that $\mathrm{r}(S)=a\mathrm{m}(S)+1$.
	\end{proof}

	In the two-embedding dimensional case, the necessary condition of the above lemma is also sufficient.
	
	\begin{proposition}\label{prop-002}
		Let $S=\langle n_1 , n_2 \rangle$ be a numerical semigroup with $2\leq n_1<n_2$. Then $S$ is a MANS-semigroup if and only if there exists $a\in\mathbb{N}\setminus\{0\}$ such that $n_2=an_1+1$.
	\end{proposition}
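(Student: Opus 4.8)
The plan is to prove the two implications separately, and the forward direction is essentially already in hand. For ($\Rightarrow$) I would simply invoke Lemma~\ref{lem-001}: in the two-embedding dimensional case we have $\mathrm{m}(S)=n_1$ and $\mathrm{r}(S)=n_2$, so the existence of $a\in\mathbb{N}\setminus\{0\}$ with $\mathrm{r}(S)=a\mathrm{m}(S)+1$ translates immediately into $n_2=an_1+1$. No further work is needed here.

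For the reverse implication ($\Leftarrow$), I would assume $n_2=an_1+1$ for some $a\geq 1$ and compute $\mathrm{Ap}(S,n_1)$ explicitly. The key step is to establish the standard description $\mathrm{Ap}(S,n_1)=\{0,n_2,2n_2,\ldots,(n_1-1)n_2\}$ for a two-generated semigroup. This follows by noting that these $n_1$ elements lie in pairwise distinct residue classes modulo $n_1$ (since $\gcd(n_1,n_2)=1$, which holds automatically because $n_2=an_1+1$), and that each $kn_2$ is the least element of $S$ in its class: any $s=\alpha n_1+\beta n_2\in S$ with $s\equiv kn_2\pmod{n_1}$ forces $\beta\equiv k\pmod{n_1}$, and among the admissible values $\beta\in\{k,k+n_1,k+2n_1,\ldots\}$ the smallest $s$ is attained at $\alpha=0$, $\beta=k$. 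Since an Apéry set has exactly one element per residue class modulo $n_1$, this pins down $\mathrm{Ap}(S,n_1)$ completely.

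Once the Apéry set is known, the conclusion is immediate. Because $n_2=an_1+1$ gives $n_2\equiv 1\pmod{n_1}$, we have $kn_2\equiv k\pmod{n_1}$ for each $k\in\{0,1,\ldots,n_1-1\}$, and therefore $w(k)=kn_2$. Hence the chain $w(1)<w(2)<\cdots<w(n_1-1)$ reduces to $n_2<2n_2<\cdots<(n_1-1)n_2$, which holds trivially as $n_2>0$, so $S$ is a MANS-semigroup.

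I do not expect a serious obstacle: the whole argument hinges on the explicit form of the Apéry set of a two-generated numerical semigroup, which is classical, together with the observation that the congruence $n_2\equiv 1\pmod{n_1}$ makes the residue of $kn_2$ equal to $k$. The only place requiring minimal care is verifying that $kn_2$ is genuinely the minimal element of $S$ in its residue class, but this is the short direct check sketched above.
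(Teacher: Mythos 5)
Your proposal is correct and follows essentially the same route as the paper: necessity by direct appeal to Lemma~\ref{lem-001}, and sufficiency by identifying $\mathrm{Ap}(S,n_1)=\{0,n_2,2n_2,\ldots,(n_1-1)n_2\}$ and using $n_2\equiv 1\pmod{n_1}$ to get $w(i)=in_2$. The only difference is that you spell out the verification of the Ap\'ery set (distinct residues plus minimality of each $kn_2$ in its class), which the paper dismisses as ``easy to check.''
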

	
	\begin{proof}
		Necessity is just Lemma~\ref{lem-001}. For sufficiency, it is easy to check that $\mathrm{Ap}(S,n_1)=\{0 < n_2 < 2n_2 < \cdots < (n_1-1)n_2\}$ and $in_2\equiv i \pmod{n_1}$ for all $i\in\{1,\ldots,n_1-1\}$.
	\end{proof}

	\section{Three-embedding dimensional case}\label{sect-three-dim}
	
	Observe that if $S$ is a numerical semigroup with embedding dimension equal to three, then $\mathrm{msg}(S)=\{\mathrm{m}(S) < \mathrm{r}(S) < \mathrm{M}(S)\}$.
	
	\subsection{Minimal generators}\label{generators}
	
	In the following result, we show a necessary condition for MANS-semigroups with embedding dimension equal to three.
	
	\begin{proposition}\label{prop-01}
		If $S$ is a MANS-semigroup and $\mathrm{e}(S)=3$, then there exists $\{m,a,b,t\}\subseteq\mathbb{N}$ such that $m\geq 3$, $a\geq 1$, $t\in\{2,\ldots,m-1\}$, $(t-1)(am+1)<bm+t<t(am+1)$, and $(\mathrm{m}(S), \mathrm{r}(S), \mathrm{M}(S)) = (m, am+1, bm+t)$.
	\end{proposition}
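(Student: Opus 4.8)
The plan is to read off the three generators from the Apéry set $\mathrm{Ap}(S,m)$, where $m=\mathrm{m}(S)$, and then to extract all the required inequalities from the monotonicity $w(1)<w(2)<\cdots<w(m-1)$. First I would set $m=\mathrm{m}(S)$ and observe that $m\geq 3$, since a numerical semigroup of multiplicity at most $2$ has embedding dimension at most $2$, whereas here $\mathrm{e}(S)=3$. By Lemma~\ref{lem-001} there is $a\geq 1$ with $\mathrm{r}(S)=am+1$, so that $\mathrm{r}(S)\equiv 1\pmod m$ and hence $\mathrm{r}(S)=w(1)$. Writing $\mathrm{M}(S)=bm+t$, where $t\in\{0,1,\ldots,m-1\}$ is its residue modulo $m$, Lemma~\ref{lem-000} gives $1=\mathrm{r}(S)\bmod m<\mathrm{M}(S)\bmod m=t$, so $t\geq 2$ (and $t\leq m-1$ is automatic, $t$ being a residue). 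This already yields $t\in\{2,\ldots,m-1\}$ and the shape $(\mathrm{m}(S),\mathrm{r}(S),\mathrm{M}(S))=(m,am+1,bm+t)$.

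Next I would identify $\mathrm{M}(S)$ as an Apéry element. Since $\mathrm{M}(S)$ is a minimal generator distinct from $m$, one has $\mathrm{M}(S)-m\notin S$, so $\mathrm{M}(S)\in\mathrm{Ap}(S,m)$; as $\mathrm{M}(S)\equiv t\pmod m$, this forces $\mathrm{M}(S)=w(t)$. The key computation is then to pin down $w(i)$ for $1\leq i\leq t-1$. Because every element of $\mathrm{Ap}(S,m)$ is minimal in its congruence class, it admits an expression $\alpha\,\mathrm{r}(S)+\beta\,\mathrm{M}(S)$ with $\alpha,\beta\geq 0$ and with no summand equal to $m$ (subtracting $m$ would keep the class and remain in $S$, contradicting minimality). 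For $i<t$ the strict monotonicity forces $\beta=0$: otherwise $w(i)\geq\mathrm{M}(S)=w(t)>w(i)$, a contradiction. Hence $w(i)=\alpha\,\mathrm{r}(S)$ with $\alpha\equiv i\pmod m$, and minimality of $w(i)$ gives $\alpha=i$, so $w(i)=i\,\mathrm{r}(S)=i(am+1)$.

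Finally I would assemble the two strict inequalities. Applying the previous step with $i=t-1$ gives $w(t-1)=(t-1)(am+1)$, and the monotonicity $w(t-1)<w(t)$ yields $(t-1)(am+1)<bm+t$. For the upper bound, note that $t\,\mathrm{r}(S)=t(am+1)$ lies in $S$ and is congruent to $t$ modulo $m$, so $\mathrm{M}(S)=w(t)\leq t(am+1)$; equality is impossible, for it would express $\mathrm{M}(S)$ as a multiple of $\mathrm{r}(S)$, contradicting that $\mathrm{M}(S)$ is a minimal generator. Thus $bm+t<t(am+1)$, giving $(t-1)(am+1)<bm+t<t(am+1)$. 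I expect the only genuinely delicate point to be the identification $w(i)=i(am+1)$ for $i<t$: it is there that one must combine the ``no $m$-summand'' property of Apéry elements with the strict monotonicity of the MANS condition, while everything else follows mechanically from Lemmas~\ref{lem-000} and~\ref{lem-001}.
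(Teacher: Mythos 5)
Your proof is correct and follows essentially the same route as the paper's: both arguments pin down the Ap\'ery set below $w(t)$ as $w(i)=i(am+1)$ for $i<t$ and $w(t)=\mathrm{M}(S)=bm+t$, then extract $(t-1)(am+1)<bm+t$ from monotonicity and $bm+t<t(am+1)$ from $\mathrm{M}(S)\notin\langle m,am+1\rangle$. Your version merely spells out the step the paper leaves implicit, namely the ``no $m$-summand'' representation of Ap\'ery elements combined with monotonicity to force $\beta=0$.
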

	
	\begin{proof}
		From Proposition~2.10 of \cite{springer}, we know that $\mathrm{e}(S)\leq \mathrm{m}(S)$ and, therefore, $\mathrm{m}(S)\geq 3$.
		
		By applying Lemma~\ref{lem-001}, there exists $a\in\mathbb{N}\setminus\{0\}$ such that $\mathrm{r}(S)=a\mathrm{m}(S)+1$. Moreover, we have that $\mathrm{M}(S)=b\mathrm{m}(S)+t$ with $b\in\mathbb{N}\setminus\{0\}$ and $t\in\{2,\ldots,m-1\}$.
		
		If $\mathrm{Ap}(S,\mathrm{m}(S)) = \{w(0)=0, w(1), \ldots, w(\mathrm{m}(S)-1)\}$, then clearly $w(t)=\mathrm{M}(S)$ and, since $S$ is a MANS-semigroup, we have that $w(0)=0 < w(1) = a\mathrm{m}(S)+1 < w(2) = 2(a\mathrm{m}(S)+1) < \cdots < w(t-1) = (t-1)(a\mathrm{m}(S)+1) < w(t) = \mathrm{M}(S) = b\mathrm{m}(S)+t$. Finally, since $b\mathrm{m}(S)+t \not\in \langle \mathrm{m}(S), a\mathrm{m}(S)+1 \rangle$, we deduce that $b\mathrm{m}(S)+t < t(a\mathrm{m}(S)+1)$.
	\end{proof}
	
	Our next aim is to show that the condition given in Proposition~\ref{prop-01} is also sufficient. From here, $m,a,b,t$ are positive integers such that $m\geq 3$, $a\geq 1$, $t\in\{2,\ldots,m-1\}$, $(t-1)(am+1)<bm+t<t(am+1)$; moreover, we consider $S = \langle m, am+1, bm+t \rangle$. Let us see that $S$ is a three-embedding dimension numerical semigroup, and then let us describe the Ap\'ery set $\mathrm{Ap}(S,m)$. 
	
	\begin{lemma}\label{lem-02}
		$S$ is a numerical semigroup with $\mathrm{e}(S)=3$.
	\end{lemma}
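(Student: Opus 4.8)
The plan is to verify two things: that $S$ is a numerical semigroup, and that the three given generators form a minimal system of generators, so that $\mathrm{e}(S)=3$. For the first, I would observe that $\gcd(m,am+1)=1$, since $am+1\equiv 1\pmod m$; hence $\gcd(m,am+1,bm+t)=1$ and, by Lemma~2.1 of \cite{springer}, $S=\langle m,am+1,bm+t\rangle$ is a numerical semigroup.

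For the embedding dimension, since $S$ is generated by three elements we automatically have $\mathrm{e}(S)\leq 3$, so it suffices to prove that $\{m,am+1,bm+t\}$ is minimal, that is, that none of the three generators lies in the submonoid generated by the other two. Before doing so I would record the ordering $m<am+1<bm+t$: the first inequality holds because $a\geq 1$, and the second follows from the hypothesis $bm+t>(t-1)(am+1)\geq am+1$, where the last step uses $t\geq 2$.

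I would then dispatch the three cases. The generator $m$ cannot belong to $\langle am+1,bm+t\rangle$ because every nonzero element there is at least $am+1>m$. The generator $am+1$ cannot belong to $\langle m,bm+t\rangle$ because the only elements of that monoid strictly below $bm+t$ are the multiples of $m$, whereas $am+1\equiv 1\not\equiv 0\pmod m$ and $am+1<bm+t$.

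The crux is showing $bm+t\notin\langle m,am+1\rangle$, and this is exactly where the upper bound $bm+t<t(am+1)$ enters. I would argue via residues modulo $m$: a typical element $\lambda m+\mu(am+1)=(\lambda+\mu a)m+\mu$ of $\langle m,am+1\rangle$ is congruent to $\mu$ modulo $m$, so to land in the residue class $t\bmod m$ (to which $bm+t$ belongs, since $0<t<m$) one needs $\mu\equiv t\pmod m$; because increasing $\mu$ by $m$ or increasing $\lambda$ only enlarges the element, the least element of $\langle m,am+1\rangle$ in that class is $t(am+1)$, attained at $\mu=t$, $\lambda=0$. As $bm+t$ shares this residue class but satisfies $bm+t<t(am+1)$, it cannot lie in $\langle m,am+1\rangle$. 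This completes the minimality and yields $\mathrm{e}(S)=3$. I expect this residue-class computation, together with the identification of $t(am+1)$ as the relevant minimal element in its class, to be the main point of the argument.
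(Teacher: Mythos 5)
Your proposal is correct and follows essentially the same route as the paper: $\gcd(m,am+1)=1$ gives the numerical semigroup property, and the decisive point is that $bm+t\not\in\langle m,am+1\rangle$ because $bm+t<t(am+1)$, with $t(am+1)$ being the least element of $\langle m,am+1\rangle$ congruent to $t$ modulo $m$. You merely spell out the residue-class computation and the two easy minimality checks (for $m$ and $am+1$) that the paper leaves implicit.
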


	\begin{proof}
		Since $\gcd\{m,am+1\}=1$, we have that $S$ is a numerical semigroup. Moreover, since $(t-1)(am+1)<bm+t$, then $am+1<bm+t$. Finally, to prove that $\mathrm{e}(S)=3$, it suffices to see that $mb+t\not\in\langle a, am+1 \rangle$, which is true because $bm+t<t(am+1)$.
	\end{proof}
	
	Three previous results, in which we delimit the possible elements, are necessary to explicitly give the Ap\'ery set $\mathrm{Ap}(S,m)$.
	
	\begin{lemma}\label{lem-03}
		Let $\lambda \in\mathbb{N}\setminus\{0\}$ such that $(\lambda-1)t<m\leq \lambda t$. Then $\lambda (bm+t) \not\in \mathrm{Ap}(S,m)$.
	\end{lemma}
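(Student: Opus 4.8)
The plan is to unwind the definition of the Apéry set. Since $\mathrm{Ap}(S,m)=\{s\in S \mid s-m\notin S\}$, to prove that $\lambda(bm+t)\notin\mathrm{Ap}(S,m)$ it suffices to exhibit that $\lambda(bm+t)-m\in S$. So I would compute this element explicitly and write it as a nonnegative integer combination of the two smallest generators $m$ and $am+1$.

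First I would record the arithmetic forced by the hypothesis $(\lambda-1)t<m\leq\lambda t$. Setting $r:=\lambda t-m$, this says exactly that $0\leq r\leq t-1$, and then $\lambda(bm+t)-m=\lambda bm+\lambda t-m=\lambda bm+r$. Because $0\le r<m$, the natural candidate is the decomposition $\lambda bm+r=r(am+1)+(\lambda b-ra)m$, whose right-hand side expands correctly and lies in $S$ precisely when the coefficient $\lambda b-ra$ is a nonnegative integer (the coefficient $r\ge 0$ of $am+1$ being automatic). Thus the whole statement reduces to the single inequality $\lambda b\geq ra=(\lambda t-m)a$, equivalently $ma\geq\lambda(ta-b)$.

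To verify this inequality I would extract two bounds from the standing hypotheses. From the left constraint $(t-1)(am+1)<bm+t$ one obtains, after expanding and rewriting it as $m(ta-a-b)<1$ and invoking $m\geq 3$ (so an integer multiple of $m$ that is $<1$ must be $\le 0$), the bound $ta-b\leq a$; while $bm+t<t(am+1)$ gives $b<ta$, i.e. $ta-b\geq 1$. From the right constraint $(\lambda-1)t<m$ together with $t\geq 2$ one gets $\lambda<m/2+1\leq m$, hence $\lambda\leq m$. Combining these, $\lambda(ta-b)\leq\lambda a\leq ma$, which is exactly the inequality required, so $\lambda b-ra\ge 0$ and the decomposition places $\lambda(bm+t)-m$ in $S$.

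The main obstacle is guessing the correct decomposition and realizing that everything collapses to the clean target $ma\geq\lambda(ta-b)$; once that target is isolated, the two hypotheses supply the two factors $ta-b\leq a$ and $\lambda\leq m$ almost mechanically. The only genuinely delicate point is the integrality step that turns $m(ta-a-b)<1$ into $ta-a-b\leq 0$, which is where the assumption $m\geq 3$ is actually used.
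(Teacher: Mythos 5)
Your proof is correct, and its skeleton coincides with the paper's: both set $r=\lambda t-m\in\{0,\ldots,t-1\}$ and realise $\lambda(bm+t)$ as $r(am+1)$ plus a multiple of $m$, so that everything hinges on the coefficient of $m$ being positive enough to absorb the subtraction of one copy of $m$. Where you diverge is in how that positivity is verified. You reduce it to the explicit inequality $ma\geq\lambda(ta-b)$ and prove it from two derived bounds, namely $ta-b\leq a$ (via your integrality argument) and $\lambda\leq m$ (from $(\lambda-1)t<m$ and $t\geq 2$). The paper instead observes that $\lambda(bm+t)\equiv r(am+1)\pmod{m}$ and simply chains $r(am+1)\leq(t-1)(am+1)<bm+t\leq\lambda(bm+t)$: a positive difference of two numbers congruent modulo $m$ must be at least $m$, so $\lambda(bm+t)=r(am+1)+\mu m$ with $\mu\geq 1$, and one is done. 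The paper's route exploits the standing hypothesis $(t-1)(am+1)<bm+t$ in a single stroke and never needs $\lambda\leq m$ nor any integrality considerations; yours is more computational but equally valid. Two small remarks on your write-up: the integrality step needs only $m\geq 1$, not $m\geq 3$ (an integer multiple of a positive integer that is $<1$ is automatically $\leq 0$); and the bound $ta-b\geq 1$ you extract from $bm+t<t(am+1)$ is never actually used, since $\lambda(ta-b)\leq\lambda a$ holds regardless of the sign of $ta-b$.
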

	
	\begin{proof}
		It follows directly from the hypothesis that $\lambda t-m\in\{0,\ldots,t-1\}$. Moreover, we have that $\lambda(bm+t)\equiv(\lambda t-m)(am+1)\pmod{m}$. Then, since $(\lambda t-m)(am+1) \leq (t-1)(am+1)<bm+t\leq \lambda(bm-t)$, we deduce that $\lambda(bm+t) = (\lambda t-m)(am+1) +\mu m$ for some $\mu\in\mathbb{N}\setminus\{0\}$. Therefore, $\lambda (bm+t) \not\in \mathrm{Ap}(S,m)$.
	\end{proof}
	
	An immediate consequence of the above lemma is the following one.
	
	\begin{lemma}\label{lem-04}
		If $\lambda \in\mathbb{N}$ and $\lambda t\geq m$, then $\lambda (bm+t) \not\in \mathrm{Ap}(S,m)$.
	\end{lemma}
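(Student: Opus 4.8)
The plan is to deduce the statement directly from Lemma~\ref{lem-03} by reducing an arbitrary $\lambda$ with $\lambda t\geq m$ to the distinguished index treated there. Let $\lambda_0$ be the least positive integer satisfying $\lambda_0 t\geq m$; equivalently $\lambda_0=\lceil m/t\rceil$, so that $(\lambda_0-1)t<m\leq \lambda_0 t$. Thus $\lambda_0$ is precisely the value to which Lemma~\ref{lem-03} applies, and it gives $\lambda_0(bm+t)\notin \mathrm{Ap}(S,m)$. Since $\lambda$ is an integer with $\lambda t\geq m$, we have $\lambda\geq \lambda_0$, and it remains only to pass from $\lambda_0$ to any larger $\lambda$.

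The key observation I would use is the elementary ``no re-entry'' property of Apéry sets: if $x\in S\setminus \mathrm{Ap}(S,m)$ and $y\in S$, then $x+y\in S\setminus \mathrm{Ap}(S,m)$. Indeed, $x\in S$ with $x\notin \mathrm{Ap}(S,m)$ means $x-m\in S$ by the definition $\mathrm{Ap}(S,m)=\{s\in S\mid s-m\notin S\}$, whence $(x+y)-m=(x-m)+y\in S$, so $x+y\notin \mathrm{Ap}(S,m)$. Applying this with $x=\lambda_0(bm+t)$ and $y=(\lambda-\lambda_0)(bm+t)\in S$ (a nonnegative multiple of the generator $bm+t$) yields $\lambda(bm+t)=x+y\notin \mathrm{Ap}(S,m)$, which is the claim.

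There is essentially no hard step here, in keeping with the statement being an immediate consequence of Lemma~\ref{lem-03}; the only points requiring a moment's care are checking that $\lambda_0=\lceil m/t\rceil$ genuinely satisfies the two-sided hypothesis $(\lambda_0-1)t<m\leq\lambda_0 t$ of Lemma~\ref{lem-03}, and recording the ``no re-entry'' property above. I would also note that the hypothesis $\lambda t\geq m$ together with $m\geq 3$ and $t\geq 2$ forces $\lambda\geq 1$, so the degenerate case $\lambda=0$ never arises.
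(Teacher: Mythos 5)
Your proof is correct and follows exactly the route the paper intends: the paper states Lemma~\ref{lem-04} as an ``immediate consequence'' of Lemma~\ref{lem-03}, and your argument---reducing to the minimal $\lambda_0=\lceil m/t\rceil$ covered by Lemma~\ref{lem-03} and then using the fact that adding an element of $S$ to an element of $S\setminus\mathrm{Ap}(S,m)$ stays outside $\mathrm{Ap}(S,m)$---is precisely the standard way to fill in that deduction. No gaps; your side remarks (verifying the two-sided hypothesis for $\lambda_0$ and excluding $\lambda=0$) are exactly the right points to check.
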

	
	\begin{lemma}\label{lem-05}
		If $\mu \in\mathbb{N}$ and $\mu\geq t$, then $\mu (am+1) \not\in \mathrm{Ap}(S,m)$.
	\end{lemma}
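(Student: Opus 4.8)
The plan is to use the standard criterion that an element $s\in S$ lies in $\mathrm{Ap}(S,m)$ if and only if $s-m\notin S$. Thus, to prove $\mu(am+1)\notin\mathrm{Ap}(S,m)$ for every $\mu\geq t$, I would exhibit $\mu(am+1)-m$ as an explicit nonnegative integer combination of the generators $m$, $am+1$ and $bm+t$, which forces it to belong to $S$.

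The first step is to record the identity linking the residue class $t\bmod m$ to the generator $bm+t$. Since $t(am+1)\equiv t\equiv bm+t \pmod{m}$, subtracting gives $t(am+1)-(bm+t)=(ta-b)m$. Here the defining inequality $bm+t<t(am+1)$ enters decisively: it yields $bm<tam$, hence $b<ta$, so that $ta-b\geq 1$ as integers. This is the only place where the hypothesis $bm+t<t(am+1)$ is genuinely used, and it is the crux of the argument, as it guarantees that the coefficient of $m$ appearing below is nonnegative.

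With this observation in hand, for any $\mu\geq t$ I would verify the identity
\[ \mu(am+1)-m=(bm+t)+(ta-b-1)m+(\mu-t)(am+1), \]
which follows by expanding the right-hand side. Because $ta-b-1\geq 0$ and $\mu-t\geq 0$, all three coefficients are nonnegative integers, so the right-hand side lies in $S$; hence $\mu(am+1)-m\in S$ and, by the criterion, $\mu(am+1)\notin\mathrm{Ap}(S,m)$.

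There is no serious obstacle beyond confirming the nonnegativity of $ta-b-1$, which reduces to the strict inequality already assumed. As an alternative one could proceed by induction on $\mu$: the base case $\mu=t$ holds because $bm+t<t(am+1)$ displays a strictly smaller element of $S$ in the class $t\bmod m$, and the inductive step uses that if $s\notin\mathrm{Ap}(S,m)$ then $(s-m)+(am+1)\in S$, so $s+(am+1)\notin\mathrm{Ap}(S,m)$ either.
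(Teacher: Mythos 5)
Your proof is correct and takes essentially the same route as the paper: both arguments trade $t$ copies of $am+1$ for one copy of $bm+t$ plus a multiple of $m$, with the hypothesis $bm+t<t(am+1)$ supplying the key inequality. The only difference is cosmetic—you compute the coefficient of $m$ explicitly as $ta-b-1\geq 0$, while the paper deduces the existence of a positive $\lambda$ with $\mu(am+1)=(bm+t)+(\mu-t)(am+1)+\lambda m$ from the congruence modulo $m$ together with the strict inequality.
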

	
	\begin{proof}
		It is clear that $\mu (am+1)\equiv (bm+t)+(\mu-t)(am+1) \pmod{m}$. Moreover, $(bm+t)+(\mu-t)(am+1)\in S$ and $(bm+t)+(\mu-t)(am+1) < \mu(am+1)$ (because $bm+t < t(am+1)$). Therefore, there exists $\lambda \in\mathbb{N}\setminus\{0\}$ such that $\mu(am+1) = (bm+t)+(\mu-t)(am+1) +\lambda m$. In consequence, $\mu (am+1) \not\in \mathrm{Ap}(S,m)$.
	\end{proof}
	
	We are ready to show $\mathrm{Ap}(S,m)$. As usual, for $x\in\mathbb{R}$ and $a,b\in\mathbb{N}$, we denote by $\lfloor x \rfloor = \max\{n\in\mathbb{N} \mid n\leq x\}$ and by $a \bmod b$ the remainder of the division of $a$ by $b$.
	
	\begin{proposition}\label{prop-06}
		If $\mathrm{Ap}(S,m) = \{ w(0), w(1), \ldots, w(m-1)\}$, then
		\[ w(i) = \bigg\lfloor \frac{i}{t} \bigg\rfloor (bm+t) + (i \bmod t)(am+1), \; i\in\{0,1,\ldots,m-1\}. \]
	\end{proposition}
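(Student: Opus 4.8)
The plan is to prove the formula by a counting argument. By Lemma~2.4 of \cite{springer} the set $\mathrm{Ap}(S,m)$ has exactly $m$ elements, one in each residue class modulo $m$. I will show that the $m$ proposed values $w(0),\dots,w(m-1)$ are pairwise distinct elements of $S$ lying in distinct classes, and that every element of $\mathrm{Ap}(S,m)$ is one of them; an inclusion of an $m$-element set into a set of size $m$ is then an equality, after which matching residues identifies $w(i)$ as the least element of $S$ congruent to $i$. For the first part, write $i=\lfloor i/t\rfloor\,t+(i\bmod t)$ and use $am+1\equiv 1$ and $bm+t\equiv t \pmod m$ to get $w(i)\equiv i\pmod m$; thus the $w(i)$ occupy distinct classes (so are distinct), and each is visibly a nonnegative combination of the generators $am+1$ and $bm+t$, hence $w(i)\in S$. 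It therefore suffices to prove $\mathrm{Ap}(S,m)\subseteq\{w(0),\dots,w(m-1)\}$.

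Fix $w\in\mathrm{Ap}(S,m)$. Because $w-m\notin S$, no representation of $w$ over $\mathrm{msg}(S)$ may use $m$, so $w=\beta(am+1)+\gamma(bm+t)$ with $\beta,\gamma\in\mathbb{N}$. The hypotheses $(t-1)(am+1)<bm+t<t(am+1)$ are equivalent to $(t-1)a\le b<ta$, so $t(am+1)=(bm+t)+(ta-b)m$ with $ta-b\ge1$; this is precisely the reduction underlying Lemma~\ref{lem-05}, and it shows that $\beta\ge t$ would introduce a summand $m$ into a representation of $w$, contradicting $w-m\notin S$. Hence $\beta\le t-1$. In the same way, the reduction in Lemmas~\ref{lem-03} and \ref{lem-04} shows that $\gamma t\ge m$ would also force a summand $m$, so $\gamma t\le m-1$.

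It remains to place $w$ in the list, and here lies the main obstacle. If $\gamma t+\beta\le m-1$, then $i:=\gamma t+\beta\in\{0,\dots,m-1\}$ satisfies $\lfloor i/t\rfloor=\gamma$ and $i\bmod t=\beta$ (as $0\le\beta\le t-1$), so $w=w(i)$. The delicate possibility is the overflow $\gamma t+\beta\ge m$; combined with $\beta\le t-1$ and $\gamma t\le m-1$ this gives $m\le\gamma t+\beta\le m+t-2$, so $i:=\gamma t+\beta-m\in\{0,\dots,t-2\}$ and $w(i)=i(am+1)$. A short computation then yields $w-w(i)=m(am+1-\gamma(ta-b))$, and the bounds $ta-b\le a$ and $\gamma<m$ (from $\gamma t\le m-1$) give $am+1-\gamma(ta-b)\ge1$. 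Consequently $w-m=w(i)+m(am-\gamma(ta-b))\in S$, contradicting $w\in\mathrm{Ap}(S,m)$. Thus the overflow case is impossible, every Ap\'ery element equals some $w(i)$, and the desired inclusion---hence the proposition---follows. I expect the sign estimate in this last step, which is the only place the two-sided hypothesis on $bm+t$ is used in full, to be the crux of the argument.
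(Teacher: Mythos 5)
Your proof is correct, and its core machinery coincides with the paper's: express an Ap\'ery element as $\beta(am+1)+\gamma(bm+t)$ (no summand $m$ may occur), invoke the reductions behind Lemmas~\ref{lem-03}, \ref{lem-04} and \ref{lem-05} to get $\beta\leq t-1$ and $\gamma t\leq m-1$, and then match residues modulo $m$. The genuine difference is in the final step, and it is to your credit. The paper passes directly from ``$\gamma t+\beta\equiv i\pmod{m}$ with $\gamma t<m$ and $\beta<t$'' to ``$\gamma=\lfloor i/t\rfloor$ and $\beta=i\bmod t$'', which tacitly assumes $\gamma t+\beta\leq m-1$; yet those constraints also admit pairs with $m\leq\gamma t+\beta\leq m+t-2$ (such pairs exist exactly when $t\nmid m$, e.g.\ $\gamma=\lfloor(m-1)/t\rfloor$ and $\beta=t-1$), so the paper's identification is, as written, unjustified at that point. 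Your ``overflow'' case supplies precisely the missing argument: the identity $w-i(am+1)=m\bigl(am+1-\gamma(ta-b)\bigr)$, combined with $0<ta-b\leq a$ (equivalent to $(t-1)a\leq b<ta$, i.e.\ to the two-sided hypothesis) and $\gamma\leq m-1$, shows that an overflow representation exceeds $i(am+1)$ by at least $m$, whence $w-m\in S$, contradicting $w\in\mathrm{Ap}(S,m)$. Your counting wrapper (the $m$ candidate values lie in $m$ distinct classes, so an inclusion forces equality) is only a cosmetic repackaging of the paper's class-by-class identification of $w(i)$. In short: same approach, but your version is complete where the paper's proof is elliptical; the only cost is length.
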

	
	\begin{proof}
		By Lemmas~\ref{lem-04} and \ref{lem-05}, we deduce that $w(i)=\lambda(bm+t)+\mu(am+1)$ for some $\{\lambda,\mu\} \subseteq \mathbb{N}$ such that $\lambda t<m$ and $\mu<t$. Then, since $\lambda t+\mu \equiv i \pmod{m}$ and $i=\lfloor \frac{i}{t} \rfloor t + (i\bmod t)$, we can conclude that $\lambda=\lfloor \frac{i}{t} \rfloor$ and $\mu=i \bmod t$.
	\end{proof}
	
	We end this section with the characterisation of the three-embedding dimension MANS-semigroups.
	
	\begin{theorem}\label{thm-07}
		The following conditions are equivalent.
		\begin{enumerate}
			\item $S$ is a MANS-semigroup with $\mathrm{e}(S)=3$.
			\item $S = \langle m, am+1, bm+t \rangle$, where $\{m,a,b,t\}\subseteq\mathbb{N}$, $m\geq 3$, $a\geq 1$, $t\in\{2,\ldots,m-1\}$, and $(t-1)(am+1)<bm+t<t(am+1).$
		\end{enumerate}
	\end{theorem}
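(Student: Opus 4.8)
The plan is to prove the equivalence by combining results already established in this section. The implication from (1) to (2) is exactly the content of Proposition~\ref{prop-01}, so only the reverse direction requires work, and for that I would assume the standing hypotheses on $m,a,b,t$ and the definition $S=\langle m, am+1, bm+t\rangle$ that have been fixed immediately after Proposition~\ref{prop-01}.

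First I would invoke Lemma~\ref{lem-02} to record that $S$ is indeed a numerical semigroup with $\mathrm{e}(S)=3$, so that $\mathrm{m}(S)=m$ and the Apéry set with respect to the multiplicity is well defined. Then the heart of the argument is simply to read off monotonicity from the explicit formula for the Apéry set supplied by Proposition~\ref{prop-06}, namely
\[ w(i) = \bigg\lfloor \frac{i}{t} \bigg\rfloor (bm+t) + (i \bmod t)(am+1). \]
To show $w(1)<w(2)<\cdots<w(m-1)$ I would compare consecutive values $w(i)$ and $w(i+1)$ for $i\in\{1,\ldots,m-2\}$ and verify $w(i+1)-w(i)>0$ in each case. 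There are two cases according to whether increasing $i$ to $i+1$ leaves the quotient $\lfloor i/t\rfloor$ unchanged or increments it.

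The routine but essential computation is the difference $w(i+1)-w(i)$. When $(i+1)\bmod t\neq 0$, the quotient is unchanged and the difference equals $am+1>0$. When $(i+1)\bmod t=0$, i.e.\ $t\mid i+1$, the remainder drops from $t-1$ to $0$ while the quotient increases by one, so the difference is $(bm+t)-(t-1)(am+1)$, which is strictly positive precisely because of the standing inequality $(t-1)(am+1)<bm+t$. Thus monotonicity holds in both cases, giving $w(1)<\cdots<w(m-1)$ and hence that $S$ is a MANS-semigroup.

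I expect the main obstacle to be bookkeeping rather than any genuine difficulty: one must confirm that the case distinction on $(i+1)\bmod t$ correctly tracks the floor function and that the relevant range of $i$ keeps the arguments within $\{0,\ldots,m-1\}$ so that Proposition~\ref{prop-06} applies verbatim. Since both cases reduce to inequalities that are either immediate ($am+1>0$) or exactly the left-hand hypothesis of~(2), no further estimates are needed, and the two implications together establish the claimed equivalence.
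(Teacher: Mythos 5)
Your proposal is correct and follows essentially the same route as the paper: the forward implication is delegated to Proposition~\ref{prop-01}, and the reverse one combines Lemma~\ref{lem-02} with the explicit Ap\'ery set of Proposition~\ref{prop-06}, splitting into the same two cases according to whether $(i+1)\bmod t$ resets to zero, with the hypothesis $(t-1)(am+1)<bm+t$ handling exactly that case. Your version is in fact slightly more explicit than the paper's, since you compute the differences $w(i+1)-w(i)$ (namely $am+1$ and $(bm+t)-(t-1)(am+1)$) rather than merely citing the formula.
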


	\begin{proof}
		($1. \Rightarrow 2.$) This is Proposition~\ref{prop-01}.
		
		($2. \Rightarrow 1.$) Form Lemma~\ref{lem-02}, we know that $S$ is a numerical semigroup with $\mathrm{e}(S)=3$. To finish the proof, it will be enough to see that if $\mathrm{Ap}(S,n)=\{w(0), w(1), \ldots, w(m-1)\}$, then $w(i)<w(i+1)$ for all $i\in\{0,\ldots,m-2\}$. On the one side, if $(i+1)\bmod{t} > i\bmod{t}$, then we deduce that $w(i)<w(i+1)$ by applying Proposition~\ref{prop-06}. On the other side, if $(i+1)\bmod{t} \leq i\bmod{t}$, then $(i+1)\bmod{t}=0$ and, thereby, $\lfloor \frac{i+1}{t} \rfloor = \lfloor \frac{i}{t} \rfloor-1$. By applying again Proposition~\ref{prop-06}, we have that $w(i)<w(i+1)$.
	\end{proof}

	\subsection{Frobenius problem}\label{frobenius}
	
	The following result is the first part of Proposition~2.12 in \cite{springer}.
	
	\begin{proposition}\label{prop-08}
		If $S$ is a numerical semigroup and $n\in S\setminus\{0\}$, then $\mathrm{F}(S) = \max(\mathrm{Ap}(S,n))-n$.
	\end{proposition}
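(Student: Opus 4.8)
The plan is to analyse $S$ one residue class modulo $n$ at a time, using the description of $\mathrm{Ap}(S,n)=\{w(0)=0,w(1),\ldots,w(n-1)\}$ recalled from Lemma~2.4 of \cite{springer}, where $w(i)$ is the least element of $S$ congruent with $i$ modulo $n$. Fix $i\in\{0,1,\ldots,n-1\}$. The first step is to show that the elements of $S$ lying in the residue class $i$ are exactly $\{w(i)+kn \mid k\in\mathbb{N}\}$. The inclusion $\supseteq$ follows from closure under addition, since $w(i),n\in S$; the inclusion $\subseteq$ follows from the minimality of $w(i)$, because any $s\in S$ with $s\equiv i\pmod{n}$ satisfies $s\geq w(i)$, and then $s-w(i)$ is a non-negative multiple of $n$.

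Next I would read off, from this description, the largest integer of the class $i$ that does \emph{not} belong to $S$. Since the elements of $S$ in this class form the arithmetic progression starting at $w(i)$ with common difference $n$, the integers of class $i$ that are missing from $S$ are precisely $\{w(i)-kn \mid k\geq 1\}$, whose maximum is $w(i)-n$.

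Finally, since $\mathbb{N}$ partitions into the residue classes modulo $n$, so does $\mathbb{N}\setminus S$, and therefore $\mathrm{F}(S)$ is the largest among the per-class maxima found above. Thus $\mathrm{F}(S)=\max_{0\leq i\leq n-1}(w(i)-n)=\big(\max_{0\leq i\leq n-1}w(i)\big)-n=\max(\mathrm{Ap}(S,n))-n$, as claimed.

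I expect no serious obstacle here; the argument is purely structural. The only point needing a word of care is the class $i=0$, where $w(0)=0$ forces the missing integers to be negative, so the overall maximum is attained in a nonzero class whenever $S\neq\mathbb{N}$ (and in the degenerate case $S=\mathbb{N}$ the identity still reads correctly under the convention $\mathrm{F}(\mathbb{N})=-1$, since then $\max(\mathrm{Ap}(S,n))=n-1$).
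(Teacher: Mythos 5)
Your proof is correct. Note, however, that the paper itself gives no proof of this statement: it is quoted verbatim as the first part of Proposition~2.12 of \cite{springer}, so there is no internal argument to compare against. Your residue-class decomposition --- showing that the elements of $S$ congruent to $i$ modulo $n$ are exactly $\{w(i)+kn \mid k\in\mathbb{N}\}$, so the integers of that class outside $S$ are $\{w(i)-kn \mid k\geq 1\}$ with maximum $w(i)-n$, and then taking the maximum over classes --- is precisely the standard proof of the cited result, and it is complete. A point in its favour: by working over $\mathbb{Z}$ rather than $\mathbb{N}$ in each class, your argument matches the paper's definition of $\mathrm{F}(S)$ as the greatest \emph{integer} not in $S$, and so covers the degenerate case $S=\mathbb{N}$ (where $\mathrm{F}(S)=-1$ and $\max(\mathrm{Ap}(S,n))=n-1$) without any special pleading; your closing remark about the class $i=0$ is correct but not actually needed, since the maximum over all classes already yields $\max_i w(i)-n$ directly.
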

	
	An immediate consequence of Propositions~\ref{prop-06} and \ref{prop-08} is the next.
	
	\begin{proposition}\label{prop-09}
		If $S= \langle m,am+1,bm+t\rangle$ is a three-embedding dimension MANS-semigroup, then $\mathrm{F}(S) =  r(am+1)+q(bm+t)-m$, where $q=\left\lfloor \frac{m-1}{t} \right\rfloor$ and $r=(m-1)\bmod{t}$.
	\end{proposition}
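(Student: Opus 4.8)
The plan is to combine Proposition~\ref{prop-06} and Proposition~\ref{prop-08} directly. By Proposition~\ref{prop-08}, applied with $n=m$, we have $\mathrm{F}(S)=\max(\mathrm{Ap}(S,m))-m$, so the whole task reduces to identifying which element $w(i)$ realises the maximum of the Ap\'ery set and then evaluating it via the closed formula from Proposition~\ref{prop-06}.

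The central observation, which makes the argument essentially immediate, is that $S$ is assumed to be a MANS-semigroup. By definition this means $w(1)<w(2)<\cdots<w(m-1)$, and since $w(0)=0$ is the smallest element, the maximum of $\mathrm{Ap}(S,m)$ is attained at the last index: $\max(\mathrm{Ap}(S,m))=w(m-1)$. Thus no comparison of the individual $w(i)$ is needed; the monotonicity hypothesis hands us the maximiser for free.

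The remaining step is a direct substitution into the formula of Proposition~\ref{prop-06}. Setting $i=m-1$ gives
\[
w(m-1)=\left\lfloor \frac{m-1}{t} \right\rfloor (bm+t) + \big((m-1)\bmod t\big)(am+1)=q(bm+t)+r(am+1),
\]
upon recognising $q=\lfloor (m-1)/t \rfloor$ and $r=(m-1)\bmod t$. Subtracting $m$ then yields $\mathrm{F}(S)=r(am+1)+q(bm+t)-m$, as claimed.

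I do not expect any genuine obstacle here: the only point requiring care is the justification that the maximum of the Ap\'ery set sits at index $m-1$, and this is precisely the content of the MANS hypothesis together with $w(0)=0$. Everything else is a mechanical evaluation of the formula already established in Proposition~\ref{prop-06}.
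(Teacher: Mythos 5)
Your proposal is correct and follows exactly the route the paper intends: the paper presents this proposition as ``an immediate consequence'' of Propositions~\ref{prop-06} and \ref{prop-08}, which is precisely your combination of $\mathrm{F}(S)=\max(\mathrm{Ap}(S,m))-m$ with the closed formula for $w(i)$. Your explicit justification that the maximum sits at $w(m-1)$ (via the MANS monotonicity and $w(0)=0$) is the only non-mechanical point, and you handle it correctly.
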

	
	\begin{remark}\label{rem-09}
		By applying the second case of the first theorem in \cite{byrnes}, we recover Proposition~\ref{prop-09}. 
	\end{remark}
	
	Let us see an example of the above proposition.
	
	\begin{example}\label{exmp-10}
		Let $S=\langle 5,6,13 \rangle = \{0,5,6,10,11,12,13,15,\to\}$ (where the symbol $\to$ means that all integers greater than 15 belong to $S$). Then $\mathrm{Ap}(S,5)=\{w(0)=0, w(1)=6, w(2)=12, w(3)=13, w(4)=19\}$ and, therefore, $S$ is a MANS-semigroup with $\mathrm{e}(S)=3$. Moreover, since $m=5$, $am+1=6$, $bm+t=13$, and $t=3$, then $q=1$, $r=1$, and consequently $\mathrm{F}(S) = 6+13-5 = 14$.
	\end{example}
	
	The following result is the second statement of Proposition~2.12 in \cite{springer}.

	\begin{proposition}\label{prop-11}
		Let $S$ be a numerical semigroup, $n\in S\setminus\{0\}$, and $\mathrm{Ap}(S,n)=\{w(0),w(1),\ldots,w(n-1)\}$. Then $\mathrm{g}(S)=\frac{w(0)+w(1)+\cdots+w(n-1)}{n}-\frac{n-1}{2}$.
	\end{proposition}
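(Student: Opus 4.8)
The plan is to count the gaps of $S$ one residue class modulo $n$ at a time. First I would fix $i\in\{0,1,\ldots,n-1\}$ and describe exactly which elements of $\mathbb{N}$ congruent to $i$ modulo $n$ belong to $S$. Since $w(i)$ is by definition the least element of $S$ in this class, and since $n\in S$ forces $w(i)+kn\in S$ for every $k\in\mathbb{N}$, I would argue that the elements of $S$ lying in class $i$ are precisely $\{w(i)+kn \mid k\in\mathbb{N}\}$; conversely, every element of the class strictly below $w(i)$ is a gap. Hence the gaps in class $i$ are exactly $i, i+n, \ldots, w(i)-n$, and writing $w(i)=i+k_i n$ there are $k_i=\frac{w(i)-i}{n}$ of them.

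Next I would sum over all residue classes. As these classes partition $\mathbb{N}$, the total number of gaps is $\mathrm{g}(S)=\sum_{i=0}^{n-1}\frac{w(i)-i}{n}$. Splitting the sum gives $\frac{1}{n}\sum_{i=0}^{n-1}w(i)-\frac{1}{n}\sum_{i=0}^{n-1}i$, and the second term equals $\frac{1}{n}\cdot\frac{n(n-1)}{2}=\frac{n-1}{2}$, which yields the claimed formula. Note that the $i=0$ term contributes nothing, consistent with $w(0)=0$.

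The only point requiring care, and the main obstacle such as it is, is justifying that within each residue class the gaps form the initial segment below $w(i)$, with no further gaps appearing afterwards. This rests on the closure of $S$ under addition of $n$ (equivalently, on the description of $\mathrm{Ap}(S,n)$ in Lemma~2.4 of \cite{springer}): once $w(i)$ is reached, every later element of the class is obtained by adding a multiple of $n\in S$ and therefore lies in $S$. With this observation established, the remaining counting and the arithmetic simplification are entirely routine.
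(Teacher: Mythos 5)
Your proof is correct and complete. Note that the paper does not actually prove this proposition at all: it is quoted verbatim as the second statement of Proposition~2.12 of \cite{springer}, so there is no internal argument to compare against. Your residue-class counting is exactly the standard proof of that cited result: the elements of $S$ congruent to $i$ modulo $n$ are precisely $\{w(i)+kn \mid k\in\mathbb{N}\}$ (minimality of $w(i)$ plus closure under adding $n\in S$), hence class $i$ contains exactly $\frac{w(i)-i}{n}$ gaps, and summing over $i\in\{0,1,\ldots,n-1\}$ and using $\sum_{i=0}^{n-1} i=\frac{n(n-1)}{2}$ yields $\mathrm{g}(S)=\frac{w(0)+w(1)+\cdots+w(n-1)}{n}-\frac{n-1}{2}$, with the degenerate case $w(i)=i$ (in particular $i=0$) contributing zero gaps as you observe.
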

	
	Now we can show a formula for the genus of a three-embedding dimension MANS-semigroup.

	\begin{proposition}\label{prop-12}
		If $S= \langle m,am+1,bm+t\rangle$ is a three-embedding dimension MANS-semigroup, then
		\[ \mathrm{g}(S)=\frac{qt(t-1)+r(r+1)}{2m}(am+1) + \frac{qt(q-1)+2q(r+1)}{2m}(bm+t) - \frac{m-1}{2}, \] where $q=\left\lfloor \frac{m-1}{t} \right\rfloor$ and $r=(m-1)\bmod{t}$. 
	\end{proposition}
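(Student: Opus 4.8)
The plan is to combine the genus formula of Proposition~\ref{prop-11} (taking $n=m$) with the explicit description of the Ap\'ery set from Proposition~\ref{prop-06}. By Proposition~\ref{prop-11},
\[ \mathrm{g}(S) = \frac{1}{m}\sum_{i=0}^{m-1} w(i) - \frac{m-1}{2}, \]
so the whole task reduces to evaluating the sum $\sum_{i=0}^{m-1} w(i)$. Substituting $w(i) = \lfloor i/t \rfloor (bm+t) + (i\bmod t)(am+1)$ and separating the two linear pieces, I would obtain
\[ \sum_{i=0}^{m-1} w(i) = (bm+t)\sum_{i=0}^{m-1}\Big\lfloor \frac{i}{t}\Big\rfloor + (am+1)\sum_{i=0}^{m-1}(i\bmod t), \]
whence it suffices to compute these two arithmetic sums over the index block of length $m$.

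The key step is to handle the index range by writing $m-1 = qt + r$ with $q = \lfloor (m-1)/t\rfloor$ and $r = (m-1)\bmod t$, so that $0\le r\le t-1$ and the $m$ indices $i\in\{0,\ldots,m-1\}$ split into $q$ complete blocks of length $t$ followed by a partial block of length $r+1$. For the residue sum, on each complete block the values $i\bmod t$ run through $0,1,\ldots,t-1$, contributing $\tfrac{t(t-1)}{2}$ per block, while the partial block contributes $0+1+\cdots+r = \tfrac{r(r+1)}{2}$; hence
\[ \sum_{i=0}^{m-1}(i\bmod t) = q\,\frac{t(t-1)}{2} + \frac{r(r+1)}{2} = \frac{qt(t-1)+r(r+1)}{2}. \]

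For the floor sum, $\lfloor i/t\rfloor$ is constant equal to $k$ on the $k$-th complete block ($k=0,\ldots,q-1$), each such block contributing $kt$, and it equals $q$ on the final $r+1$ indices; summing gives
\[ \sum_{i=0}^{m-1}\Big\lfloor \frac{i}{t}\Big\rfloor = t\sum_{k=0}^{q-1} k + q(r+1) = t\,\frac{q(q-1)}{2} + q(r+1) = \frac{qt(q-1)+2q(r+1)}{2}. \]
Inserting these two values into the displayed expression for $\sum w(i)$, dividing by $m$, and subtracting $\tfrac{m-1}{2}$ then yields exactly the claimed formula.

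I do not expect any genuine obstacle beyond the careful bookkeeping of the block decomposition. The one point that needs attention is the off-by-one in the partial block: the last block has $r+1$ indices (not $r$), which is precisely what produces the $(r+1)$ factors and the term $r(r+1)$ rather than $r(r-1)$. Once the splitting $m-1 = qt+r$ is fixed consistently with the definitions of $q$ and $r$ in the statement, both sums are elementary and the result follows immediately.
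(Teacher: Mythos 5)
Your proposal is correct and follows essentially the same route as the paper: both proofs combine the explicit description of $\mathrm{Ap}(S,m)$ from Proposition~\ref{prop-06} with the genus formula of Proposition~\ref{prop-11}, organising the $m$ indices into $q$ complete blocks of length $t$ plus a partial block of length $r+1$ via $m-1=qt+r$. The only difference is presentational: the paper lists the Ap\'ery set block by block and leaves the summation implicit, while you carry out the two arithmetic sums $\sum_{i=0}^{m-1}\lfloor i/t\rfloor$ and $\sum_{i=0}^{m-1}(i\bmod t)$ explicitly, which is a welcome level of detail.
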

	
	\begin{proof}
		As a consequence of Proposition~\ref{prop-06}, we have that
		\[ \begin{split} 
			\mathrm{Ap}(S,m) & = \{ 0,(am+1),\ldots, (t-1)(am+1), \\
			 & (bm+t),(am+1)+(bm+t),\ldots, (t-1)(am+1)+(bm+t),\ldots, \\
			 & (q-1)(bm+t),(am+1)+(q-1)(bm+t), \ldots, \\
			 & (t-1)(am+1)+(q-1)(bm+t), \\
			 & r(bm+t),(am+1)+q(bm+t),\ldots, r(am+1)+q(bm+t) \}
			\end{split} \]
		Then, by applying Proposition~\ref{prop-11}, we get the result.
	\end{proof}
	
	\begin{remark}
		Since $qt(t-1)+r(r+1) + (qt(q-1)+2q(r+1))t = m(m-1)$, we can rewrite the formula of the previous proposition as
		\[ \mathrm{g}(S)=\frac{qt(t-1)+r(r+1)}{2}a + \frac{qt(q-1)+2q(r+1)}{2}b. \]
	\end{remark}
	
	Let us see two examples of the content of the above proposition.
	
	\begin{example}\label{exmp-13}
		Let $S=\langle 5,6,13 \rangle$ the numerical semigroup of Example~\ref{exmp-10}. Then $a=1$, $b=2$, $m=5$, $t=3$, $q=1$, and $r=1$. By applying Proposition~\ref{prop-12}, we have that $\mathrm{g}(S) = \frac{(3\times2+1\times2)\times6+(3\times0+2\times1\times2)\times13}{2\times5}-\frac{4}{2}=8 $.
	\end{example}
	
	\begin{example}\label{exmp-14}
		If $S=\langle 10,11,23 \rangle$, then $\mathrm{Ap}(S,10)=\{w(0)=0, w(1)= 11, w(2)=22, w(3)=23, w(4)=34, w(5)=45, w(6)=46, w(7)=57, w(8)=68, w(9)=69\}$ and, therefore, $S$ is a MANS-semigroup. Moreover, since $a=1$, $b=2$, $m=10$, $t=3$, $q=3$, and $r=0$, from Proposition~\ref{prop-12}, it follows that $\mathrm{g}(S) = \frac{(9\times2+0\times1)\times11+(9\times2+2\times3\times1)\times23}{2\times10}-\frac{9}{2}=33 $.
	\end{example}

	\subsection{Pseudo-Frobenius numbers}\label{pseudofrobenius}
	
	Let $S$ be a numerical semigroup. Following the terminology in \cite{JPAA}, a \textit{pseudo-Frobenius number of $S$} is an element $x\in\mathbb{Z}\setminus S$ such that $x+s\in S$ for all $s\in S\setminus\{0\}$. We denote by $\mathrm{PF}(S) = \{ x \mid x \mbox{ is a pseudo-Frobenius number of $S$} \}$. The cardinality of $\mathrm{PF}(S)$ is called the \textit{type of $S$}, denoted by $\mathrm{t}(S)$. From \cite{froberg}, we have that if $S$ is a numerical semigroup with $\mathrm{e}(S)=3$, then $\mathrm{t}(S)\in\{1,2\}$.
	
	Let $S$ be a numerical semigroup. We define over $\mathbb{Z}$ the following binary relation: $a\leq_Sb$ if $b-a\in S$. It is clear that $\leq_S$ is a non-strict partial order relation (that is, it is reflexive, transitive, and anti-symmetric).
	
	The following result is Proposition~2.20 of \cite{springer} (see also Proposition~7 of \cite{froberg}) and characterises the pseudo-Frobenius numbers in terms of the maximal elements of $\mathrm{Ap}(S,n)$ with respect to the relation $\leq_S$.
	
	\begin{proposition}\label{prop-15}
		Let $S$ be a numerical semigroup and $n\in S\setminus\{0\}$. Then 
		\[\mathrm{PF}(S)=\{w-n \mid w\in \mathrm{Maximals}_{\leq_S} (\mathrm{Ap}(S,n))  \}.\]
	\end{proposition}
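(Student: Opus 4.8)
The plan is to establish the stated equality by setting up a bijection between pseudo-Frobenius numbers and maximal Apéry elements via the translation $x \mapsto w = x+n$. The starting point is that $n \in S \setminus \{0\}$, so the defining condition of a pseudo-Frobenius number may in particular be tested against $s=n$. First I would record the easy membership facts: if $x \in \mathrm{PF}(S)$ then $x \notin S$ while $x+n \in S$, so that $w := x+n$ lies in $\mathrm{Ap}(S,n)$ (indeed $w-n = x \notin S$); conversely, if $w \in \mathrm{Ap}(S,n)$ then $x := w-n \notin S$ by the very definition of the Apéry set. Thus the two sides of the claimed identity are related by the shift by $n$, and it remains only to match ``pseudo-Frobenius'' on the left with ``maximal for $\leq_S$'' on the right.

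For the inclusion $\mathrm{PF}(S) \subseteq \{w-n \mid w \in \mathrm{Maximals}_{\leq_S}(\mathrm{Ap}(S,n))\}$, I would take $x \in \mathrm{PF}(S)$ and put $w = x+n \in \mathrm{Ap}(S,n)$ as above. To see that $w$ is maximal, suppose $w \leq_S w'$ with $w' \in \mathrm{Ap}(S,n)$ and $w' \neq w$. Then $s := w'-w \in S \setminus \{0\}$, and since $x$ is pseudo-Frobenius we get $x+s \in S$; but $x+s = (w-n)+(w'-w) = w'-n$, contradicting $w' \in \mathrm{Ap}(S,n)$. Hence no such $w'$ exists and $w$ is maximal.

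For the reverse inclusion, let $w \in \mathrm{Maximals}_{\leq_S}(\mathrm{Ap}(S,n))$ and set $x = w-n \notin S$. Given any $s \in S \setminus \{0\}$, closure of $S$ gives $w+s \in S$, and $w \leq_S w+s$ with $w+s \neq w$. By maximality of $w$, the element $w+s$ cannot lie in $\mathrm{Ap}(S,n)$; by the definition of the Apéry set this forces $(w+s)-n \in S$, that is $x+s \in S$. Since this holds for every nonzero $s \in S$ and $x \notin S$, we conclude $x \in \mathrm{PF}(S)$.

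I do not expect a genuine obstacle here: the whole argument is bookkeeping of the two definitions through the shift $x = w-n$, and the only point requiring care is to invoke the order $\leq_S$ in the correct direction, so that a strictly larger Apéry element produces, after subtracting $n$, an element of $S$ that contradicts membership in $\mathrm{Ap}(S,n)$. It is worth noting that the argument simultaneously shows the map $w \mapsto w-n$ is a bijection between the two sets, so the identity is in fact an equality of sets matched element by element.
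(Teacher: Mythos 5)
Your proof is correct. Note that the paper does not prove this proposition at all---it imports it as Proposition~2.20 of \cite{springer}---and your double-inclusion argument via the shift $x\mapsto x+n$ (pseudo-Frobenius forces any strictly $\leq_S$-larger Ap\'ery element to violate the Ap\'ery condition; maximality forces $w+s$ out of $\mathrm{Ap}(S,n)$ and hence $x+s\in S$) is precisely the standard proof of that cited result, so there is nothing to add or correct.
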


	Before continuing, let us see two examples.
	
	\begin{example}\label{exmp-16a}
		Let $S=\langle 5,6,13 \rangle$ as in Example~\ref{exmp-10}. Then $\mathrm{Ap}(S,5)=\{0,6,12,13,19\}$ and, thereby, $\mathrm{Maximals}_{\leq_S} (\mathrm{Ap}(S,5)) = \{12,19\}$. By applying Proposition~\ref{prop-15}, we have that $\mathrm{PF}(S) = \{7,14\}$.
	\end{example}
	
	\begin{example}\label{exmp-16b}
		Let $S=\langle 10,11,23 \rangle$ as in Example~\ref{exmp-14}. Then $\mathrm{Ap}(S,10)=\{0,11,22,23,34,45,46,57,68,69\}$ and $\mathrm{Maximals}_{\leq_S} (\mathrm{Ap}(S,10)) = \{68,69\}$. By Proposition~\ref{prop-15}, we have that $\mathrm{PF}(S) = \{58,59\}$.
	\end{example}
	
	The following result follows from the proof of the Proposition~\ref{prop-12}.
	
	\begin{lemma}\label{lem-17}
		Let $S= \langle m,am+1,bm+t\rangle$ be a three-embedding dimension MANS-semigroup, $q=\left\lfloor \frac{m-1}{t} \right\rfloor$, and $r=(m-1)\bmod{t}$. Then $ \{q(bm+t)+r(am+1)\} \subseteq \mathrm{Maximals}_{\leq_S} (\mathrm{Ap}(S,m)) \subseteq \{(q-1)(bm+t)+(t-1)(am+1), q(bm+t)+r(am+1)\} $
		and, consequently, $ \{q(bm+t)+r(am+1)-m\} \subseteq \mathrm{PF}(S) \subseteq \{(q-1)(bm+t)+(t-1)(am+1)-m, q(bm+t)+r(am+1)-m\}$.		
	\end{lemma}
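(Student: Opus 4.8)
The plan is to reduce everything to the explicit description of $\mathrm{Ap}(S,m)$ obtained in the proof of Proposition~\ref{prop-12} and to read off the $\leq_S$-maximal elements directly from the ``grid'' appearing there; the statement about $\mathrm{PF}(S)$ is then immediate from Proposition~\ref{prop-15} by subtracting $m$. I would start from the standard fact that $w\in\mathrm{Ap}(S,m)$ is $\leq_S$-maximal if and only if both $w+(am+1)$ and $w+(bm+t)$ lie outside $\mathrm{Ap}(S,m)$. To justify it, suppose $w$ is not maximal, say $w\leq_S w'$ with $w'\in\mathrm{Ap}(S,m)\setminus\{w\}$. Then $w'-w\in S\setminus\{0\}$ is not a multiple of $m$ (otherwise $w'-m\in S$, contradicting $w'\in\mathrm{Ap}(S,m)$), so its minimal-generator expansion contains some $g\in\{am+1,bm+t\}$; and $w+g\in\mathrm{Ap}(S,m)$, since $w+g-m\in S$ would force $w'-m\in S$. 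Thus maximality is detected by the two elementary moves ``add $am+1$'' and ``add $bm+t$''.

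Next I would translate these moves into the parametrisation of Proposition~\ref{prop-06}, writing a generic Apéry element as $w(\lambda,\mu)=\lambda(bm+t)+\mu(am+1)$ with $0\leq\mu\leq t-1$ and $\lambda t+\mu\leq m-1$; these are exactly the grid entries of the proof of Proposition~\ref{prop-12}, the full rows $\lambda=0,\dots,q-1$ carrying $\mu\in\{0,\dots,t-1\}$ and the partial last row $\lambda=q$ carrying $\mu\in\{0,\dots,r\}$. Adding $am+1$ sends $w(\lambda,\mu)$ to $w(\lambda,\mu+1)$ and stays in $\mathrm{Ap}(S,m)$ exactly when $(\lambda,\mu+1)$ is still a grid entry; for $\mu=t-1$ the identity $t(am+1)=(bm+t)+(ta-b)m$ with $ta-b\geq1$ (which holds because $bm+t<t(am+1)$) shows the result leaves the Apéry set, while the rightmost cell $(q,r)$ moves into residue class $0$ and hence also leaves. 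Similarly, adding $bm+t$ sends $w(\lambda,\mu)$ to $w(\lambda+1,\mu)$, staying in $\mathrm{Ap}(S,m)$ precisely when $(\lambda+1)t+\mu\leq m-1$; otherwise the cell is at the bottom of its column and I must verify that the move genuinely exits.

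This downward check is the crux, and it is where the MANS inequalities enter. For a bottom cell the down-move has residue $c\in\{0,\dots,t-1\}$, so its only possible Apéry competitor is $w(c)=c(am+1)$; the difference works out to a positive multiple of $m$ precisely because $(t-1)(am+1)<bm+t$ (together with $q\geq1$, which holds since $t\leq m-1$), so no bottom cell can descend. Consequently $w(\lambda,\mu)$ is $\leq_S$-maximal if and only if it is simultaneously rightmost in its row and lowest in its column, and inspecting the grid shows the only such cells are $(q,r)$, always present and equal to $q(bm+t)+r(am+1)=w(m-1)=\max\mathrm{Ap}(S,m)$, together with $(q-1,t-1)=(q-1)(bm+t)+(t-1)(am+1)$, which is maximal exactly when $r<t-1$ (when $r=t-1$ this cell still descends to $(q,r)$, so only $w(m-1)$ survives). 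This gives the claimed sandwich for $\mathrm{Maximals}_{\leq_S}(\mathrm{Ap}(S,m))$, whence the $\mathrm{PF}(S)$ inclusions. I expect the only delicate point to be precisely this downward exit: a priori a down-move could re-enter the Apéry set at a small row-$0$ element, and it is the inequality $(t-1)(am+1)<bm+t$ that rules this out.
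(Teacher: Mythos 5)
Your proof is correct, and all the key steps check out: the neighbour characterisation of $\leq_S$-maximality is properly justified, the grid parametrisation $w(\lambda,\mu)=\lambda(bm+t)+\mu(am+1)$ with $0\leq\mu\leq t-1$, $\lambda t+\mu\leq m-1$ agrees with Proposition~\ref{prop-06}, and the two exit arguments (rightward via $t(am+1)=(bm+t)+(ta-b)m$ with $ta-b\geq1$, downward via $(t-1)(am+1)<bm+t$ together with $q\geq1$) do show that the corresponding moves leave $\mathrm{Ap}(S,m)$ at the boundary of the grid. In spirit your route is the paper's: the paper offers no written argument beyond citing the grid displayed in the proof of Proposition~\ref{prop-12}. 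The difference is that you prove strictly more than the lemma asserts, and the step you single out as the ``crux'' is in fact unnecessary for the stated two-sided inclusion. The lower inclusion is immediate because $q(bm+t)+r(am+1)=w(m-1)$ is the numerical maximum of $\mathrm{Ap}(S,m)$ and $\leq_S$ refines the usual order, so it is trivially $\leq_S$-maximal; and the upper inclusion needs only the easy direction of your characterisation: every grid cell other than $(q,r)$ and $(q-1,t-1)$ has a right- or down-neighbour still inside the grid (rows $0,\dots,q-2$ always admit the down-move, row $q-1$ admits the down-move for $\mu\leq r$ and the right-move for $\mu\leq t-2$, row $q$ admits the right-move for $\mu\leq r-1$), hence is dominated and non-maximal -- no exit lemma required. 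What your extra work buys is an exact determination of $\mathrm{Maximals}_{\leq_S}(\mathrm{Ap}(S,m))$: it is $\{q(bm+t)+r(am+1)\}$ when $r=t-1$ (equivalently $t\mid m$) and consists of both candidates otherwise, which makes Propositions~\ref{prop-18} and \ref{prop-21} immediate corollaries; the paper instead leaves the lemma as a sandwich and settles that dichotomy separately in Proposition~\ref{prop-18}, by testing whether the difference of the two candidates lies in $S$.
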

	
	Let us characterise when a three-embedding dimension MANS-semigroup has a type equal to one or when it equal to two.
	
	\begin{proposition}\label{prop-18}
		Let $S= \langle m,am+1,bm+t\rangle$ be a three-embedding dimension MANS-semigroup, $q=\left\lfloor \frac{m-1}{t} \right\rfloor$, and $r=(m-1)\bmod{t}$. Then $\mathrm{t}(S)=1$ if and only if $t \,\vert\, m$ (that is, $t$ divides $m$).
	\end{proposition}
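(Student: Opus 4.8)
The plan is to start from Lemma~\ref{lem-17}, which confines $\mathrm{Maximals}_{\leq_S}(\mathrm{Ap}(S,m))$ to the two-element set $\{w_2, w_1\}$, where $w_1 = q(bm+t)+r(am+1)$ and $w_2 = (q-1)(bm+t)+(t-1)(am+1)$, and which guarantees that $w_1$ is always a maximal. By Proposition~\ref{prop-15} the type $\mathrm{t}(S)$ equals the number of these maximals, so $\mathrm{t}(S)\in\{1,2\}$ and $\mathrm{t}(S)=1$ precisely when $w_2$ fails to be maximal. First I would observe that $w_1 > w_2$ as integers: indeed $w_1 - w_2 = (bm+t) - (t-1-r)(am+1) \ge (bm+t) - (t-1)(am+1) > 0$ by the defining inequality $(t-1)(am+1) < bm+t$. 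Because $\leq_S$ is a partial order on the finite set $\mathrm{Ap}(S,m)$ and $w_1$ is the only possible maximal distinct from $w_2$, the element $w_2$ is non-maximal exactly when $w_2 <_S w_1$, that is, when $w_1 - w_2 \in S$. Thus the whole statement reduces to showing that $w_1 - w_2 \in S$ if and only if $t \mid m$.

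Next I would translate the divisibility condition into the parameters $q,r$. Since $m-1 = qt + r$ with $0\le r\le t-1$, one has $t\mid m$ if and only if $r = t-1$. The difference to be tested is $w_1 - w_2 = (bm+t) - (t-1-r)(am+1)$.

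For the easy direction, assuming $t\mid m$, I would substitute $r = t-1$ to get $w_1 - w_2 = bm+t \in S$, whence $w_2 <_S w_1$, so $w_2$ is not maximal and $\mathrm{t}(S)=1$. For the converse I would assume $t\nmid m$, so that $r\le t-2$ and $t-1-r\ge 1$, and show $w_1 - w_2 \notin S$. Reducing modulo $m$ (using $bm+t\equiv t$ and $am+1\equiv 1$) gives $w_1 - w_2 \equiv r+1 \pmod m$, and since $r+1\in\{1,\ldots,t-1\}$, Proposition~\ref{prop-06} identifies the least element of $S$ in this residue class as $w(r+1) = (r+1)(am+1)$. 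Using $bm+t < t(am+1)$ I would bound $w_1 - w_2 = (bm+t)-(t-1-r)(am+1) < t(am+1)-(t-1-r)(am+1) = (r+1)(am+1) = w(r+1)$; combined with $w_1 - w_2 > 0$, this places $w_1 - w_2$ strictly between $0$ and the minimal representative of its residue class, forcing $w_1 - w_2 \notin S$. Hence $w_2$ is maximal and $\mathrm{t}(S)=2$.

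The main obstacle is this last membership test, namely ruling out $w_1 - w_2 \in S$ when $t\nmid m$. The idea that resolves it is to compare $w_1 - w_2$ directly with the explicit minimal element of its residue class furnished by Proposition~\ref{prop-06}, so that the strict inequality $bm+t < t(am+1)$ from the MANS hypothesis does exactly the work of pushing $w_1 - w_2$ below that minimum.
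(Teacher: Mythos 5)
Your proof is correct, and while it shares the paper's overall frame, it resolves the hard direction by a genuinely different argument. Like the paper, you use Lemma~\ref{lem-17} and Proposition~\ref{prop-15} to identify $\mathrm{t}(S)=1$ with the failure of $w_2=(q-1)(bm+t)+(t-1)(am+1)$ to be maximal, reduce that to the membership test $w_1-w_2\in S$ (where $w_1=q(bm+t)+r(am+1)$), translate $t\mid m$ into $r=t-1$, and settle sufficiency by the same substitution $w_1-w_2=bm+t\in S$. The divergence is in necessity. The paper argues directly from minimality of generators: if $w_1-w_2=(bm+t)+(r-t+1)(am+1)\in S$ with $t-1-r\geq 1$, then $bm+t=(w_1-w_2)+(t-1-r)(am+1)$ would write the minimal generator $bm+t$ as a sum of two nonzero elements of $S$, which is impossible; hence $r=t-1$ and $t\mid m$. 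You instead argue contrapositively and computationally: assuming $r\leq t-2$, you compute $w_1-w_2\equiv r+1\pmod{m}$, invoke Proposition~\ref{prop-06} to identify the least element of $S$ in that class as $w(r+1)=(r+1)(am+1)$, and use the defining inequality $bm+t<t(am+1)$ to squeeze $w_1-w_2$ strictly between $0$ and $w(r+1)$, so $w_1-w_2\notin S$ and $w_2$ is maximal. Both arguments are sound; the paper's is shorter and needs nothing beyond the notion of minimal generator, while yours costs an appeal to the explicit Ap\'ery set of Proposition~\ref{prop-06} but in return exhibits concretely why the inequality $bm+t<t(am+1)$ is exactly what forces the second pseudo-Frobenius number to appear when $t\nmid m$.
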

	
	\begin{proof}
		\textit{(Necessity.)} From Proposition~\ref{prop-15} and Lemma~\ref{lem-17}, we deduce that if $\mathrm{t}(S)=1$, then $q(bm+t)+r(am+1)-((q-1)(bm+t)+(t-1)(am+1))\in S$ and, therefore, $bm+t+(r-t+1)(am+1)\in S$. By applying that $bm+t\in\mathrm{msg}(S)$, we have that $r-t+1\geq 0$ and, consequently, $r=t-1$. Thus, $m-1=qt+t-1$ and, thereby, $t \,\vert\, m$.
		
		\textit{(Sufficiency.)} If $t \,\vert\, m$, then there exists $k\in\mathbb{N}$ such that $m=kt$ and, therefore, $m-1=(k-1)t+t-1$. Consequently, $r=t-1$ and  $q(bm+t)+r(am+1)-((q-1)(bm+t)+(t-1)(am+1))=bm+t\in S$. Now, by applying Proposition~\ref{prop-15} and Lemma~\ref{lem-17}, we conclude that $\mathrm{t}(S)=1$.
	\end{proof}
	
	We deduce the following result from Propositions~\ref{prop-15} and \ref{prop-18} and Lemma~\ref{lem-17}. We denote by $t\nmid m$ that $t$ does not divide $m$.
		
	\begin{proposition}\label{prop-21}
		Let $S= \langle m,am+1,bm+t\rangle$ be a three-embedding dimension MANS-semigroup, $q=\left\lfloor \frac{m-1}{t} \right\rfloor$, and $r=(m-1)\bmod{t}$.
		\begin{enumerate}
			\item If $t \,\vert\, m$, then $\mathrm{PF}(S) = \{q(bm+t)+r(am+1)-m\}$.
			\item If $t\nmid m$, then $\mathrm{PF}(S) = \{(q-1)(bm+t)+(t-1)(am+1)-m, q(bm+t)+r(am+1)-m\}$.
		\end{enumerate}
	\end{proposition}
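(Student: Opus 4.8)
The plan is to read off each case purely from a cardinality count combined with the sandwiching already established in Lemma~\ref{lem-17}. The only external ingredient is the dichotomy $\mathrm{t}(S)\in\{1,2\}$ for embedding dimension three, recalled from \cite{froberg}, which guarantees that $\mathrm{PF}(S)$ has either exactly one or exactly two elements. Proposition~\ref{prop-18} decides which of these occurs in terms of the divisibility $t\mid m$, and Lemma~\ref{lem-17} already names the candidate maximal elements of $\mathrm{Ap}(S,m)$, equivalently (via Proposition~\ref{prop-15}) the candidate pseudo-Frobenius numbers. So the proposition reduces to matching a known cardinality with a known sandwich.

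For the first statement I would assume $t\mid m$. By Proposition~\ref{prop-18} we have $\mathrm{t}(S)=1$, so $\mathrm{PF}(S)$ is a singleton. On the other hand, the lower inclusion in Lemma~\ref{lem-17} guarantees that $q(bm+t)+r(am+1)-m\in\mathrm{PF}(S)$. A one-element set containing this element can only be $\{q(bm+t)+r(am+1)-m\}$, which is the claimed equality.

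For the second statement I would assume $t\nmid m$. Reading Proposition~\ref{prop-18} contrapositively gives $\mathrm{t}(S)\neq 1$, and hence $\mathrm{t}(S)=2$ by the dichotomy from \cite{froberg}; thus $\mathrm{PF}(S)$ has exactly two elements. The upper inclusion in Lemma~\ref{lem-17} places $\mathrm{PF}(S)$ inside the set $\{(q-1)(bm+t)+(t-1)(am+1)-m,\,q(bm+t)+r(am+1)-m\}$, which has at most two elements. A set of cardinality two contained in a set of at most two elements must coincide with it, yielding the asserted equality.

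I do not expect a genuine obstacle, since the whole content has been front-loaded into Proposition~\ref{prop-18} and Lemma~\ref{lem-17}. The only point deserving a moment's care is that the two displayed candidates in the second case are genuinely distinct, but this is automatic: $\mathrm{PF}(S)\subseteq\{A,B\}$ together with $|\mathrm{PF}(S)|=2$ forces $A\neq B$. If one prefers a direct verification, the difference of the two candidates equals $(bm+t)-(t-1-r)(am+1)$, and since $t\nmid m$ forces $r\leq t-2$ one has $(t-1-r)(am+1)\leq (t-1)(am+1)<bm+t$ by the defining inequality of $S$, so the difference is strictly positive and the two candidates are distinct.
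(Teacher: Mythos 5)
Your proof is correct and matches the paper's intent: the paper gives no written argument, stating only that the result is deduced from Propositions~\ref{prop-15} and \ref{prop-18} and Lemma~\ref{lem-17}, which is precisely your sandwich-plus-cardinality reasoning. The only cosmetic difference is that you invoke the dichotomy $\mathrm{t}(S)\in\{1,2\}$ from \cite{froberg}, which is not actually needed, since the upper inclusion of Lemma~\ref{lem-17} already bounds $|\mathrm{PF}(S)|$ by two.
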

	
	\begin{example}\label{exmp-19}
		Let $S=\langle 6,7,15 \rangle$. Then $\mathrm{Ap}(S,6)=\{ w(0)=0, w(1)=7, w(2)=14, w(3)=15, w(4)= 22, w(5)=29\}$. Therefore, $S$ is a MANS-semigroup with $\mathrm{e}(S)=3$. Since $m=6$ and $r=3$, from Proposition~\ref{prop-18} we can assert that $\mathrm{t}(S)=1$; indeed, $\mathrm{PF}(S)=\{23\}$.
	\end{example}

	\subsection{Irreducibility}\label{irreducible}
	
	Recall that a numerical semigroup $S$ is \textit{irreducible} if it is not expressible as the intersection of two numerical semigroups properly containing $S$. This concept was introduced in \cite{pacific}, where it is shown that a numerical semigroup $S$ is irreducible if and only if it is maximal (with respect to the inclusion order) in the set formed by all numerical semigroups with Frobenius number equal to $\mathrm{F}(S)$. From \cite{barucci} and \cite{froberg}, it follows that the family of irreducible numerical semigroups is the union of two well-known families, the symmetric numerical semigroups and the pseudo-symmetric numerical semigroups (see \cite{pacific}). Furthermore, a numerical semigroup is symmetric (pseudo-symmetric, respectively) if it is irreducible and has an odd Frobenius number (even Frobenius number, respectively).
	
	The following result is consequence of Corollaries~4.5, 4.11 and 4.16 in \cite{springer}.
	
	\begin{proposition}\label{prop-20}
		Let $S$ be a numerical semigroup.
		\begin{enumerate}
			\item $S$ is symmetric if and only if $\mathrm{t}(S)=1$ (equivalently, $\mathrm{PF}(S) = \{\mathrm{F}(S)\}$).
			\item $S$ is symmetric if and only if $\mathrm{F}(S)=2\mathrm{g}(S)-1$.
			\item $S$ is pseudo-symmetric if and only if $\mathrm{PF}(S)=\left\{\frac{\mathrm{F}(S)}{2},\mathrm{F}(S)\right\}$.
			\item $S$ is pseudo-symmetric if and only if $\mathrm{F}(S)=2\mathrm{g}(S)-2$.
		\end{enumerate}
	\end{proposition}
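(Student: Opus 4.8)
The plan is to reduce the four equivalences to two essentially independent ingredients: a concrete description of irreducibility in terms of pseudo-Frobenius numbers, and the elementary symmetry of the set of gaps under the involution $x\mapsto \mathrm{F}(S)-x$. Throughout I write $F=\mathrm{F}(S)$ and $g=\mathrm{g}(S)$, and I record the fact, used repeatedly, that $F\in\mathrm{PF}(S)$ always holds: indeed $F\notin S$, while $F+s>F$ forces $F+s\in S$ for every $s\in S\setminus\{0\}$.

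First I would set up the pseudo-Frobenius picture. Using Proposition~\ref{prop-15}, or directly from the definition, one checks that $\mathrm{PF}(S)$ is exactly the set of $\leq_S$-maximal gaps; since there are finitely many gaps and $\leq_S$ is a partial order, every gap lies $\leq_S$-below some element of $\mathrm{PF}(S)$. Call a gap $x$ a \emph{special gap} if $x\in\mathrm{PF}(S)$ and $2x\in S$; these are precisely the gaps $x$ for which $S\cup\{x\}$ is again a numerical semigroup. The key lemma is then that $S$ is irreducible if and only if $F$ is its only special gap. For one direction, if $x\neq F$ were a special gap then $S\cup\{x\}$ would be a numerical semigroup with the same Frobenius number $F$ strictly containing $S$, contradicting maximality; conversely, if $S$ is not maximal and $T\supsetneq S$ has $\mathrm{F}(T)=F$, then the largest element of $T\setminus S$ is a special gap of $S$ different from $F$. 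I expect this lemma, which bridges the abstract maximality definition of irreducibility with the combinatorics of gaps, to be the main obstacle; everything afterwards is bookkeeping.

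Next I would prove the pseudo-Frobenius characterisations (items 1 and 3). The crucial observation is that if $x$ is a gap with $F-x\in S\setminus\{0\}$, then $x\notin\mathrm{PF}(S)$, since $x+(F-x)=F\notin S$. Hence, for item 1, $\mathrm{t}(S)=1$ forces $\mathrm{PF}(S)=\{F\}$, so every gap $x\neq F$ satisfies $F-x\in S$ (the \emph{symmetric condition}); this makes $F$ odd, for otherwise $F/2$ would be a gap with $F-F/2=F/2\notin S$, and, since the special gaps are contained in $\mathrm{PF}(S)=\{F\}$, it makes $S$ irreducible by the lemma; thus $S$ is symmetric. The converse reverses these implications, the symmetric condition being recovered from irreducibility via the lemma. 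For item 3 I would argue analogously: from $\mathrm{PF}(S)=\{F/2,F\}$ one gets $F$ even and the \emph{pseudo-symmetric condition} that $F-x\in S$ for every gap $x\neq F/2$, and conversely; checking that $F/2$ itself always has the pseudo-Frobenius property whenever the condition holds is the one extra verification needed here.

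Finally I would obtain the genus formulas (items 2 and 4) by a counting argument built on the map $\sigma\colon S\cap\{0,\dots,F\}\to\mathbb{N}\setminus S$, $\sigma(x)=F-x$, which is well defined and injective because $x,F-x\in S$ would give $F\in S$. Comparing the sizes of domain and codomain yields $F+1-g\leq g$, that is $F\leq 2g-1$ for every numerical semigroup, with equality exactly when $\sigma$ is onto, which is the symmetric condition; this proves item 2. When $F$ is even the gap $F/2$ is never in the image of $\sigma$, so the equality $F+1-g=g-1$, i.e.\ $F=2g-2$, holds exactly when $F/2$ is the \emph{only} omitted gap; since the omitted gaps are interchanged in pairs by $y\mapsto F-y$, this happens precisely under the pseudo-symmetric condition, proving item 4. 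Matching the parity of $F$ with items 1 and 3 then closes all four equivalences.
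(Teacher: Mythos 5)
Your overall architecture is sound, and since the paper itself gives no proof of this proposition (it simply cites Corollaries 4.5, 4.11 and 4.16 of \cite{springer}), a self-contained argument is a legitimate undertaking. Your preliminaries are correct: $\mathrm{F}(S)\in\mathrm{PF}(S)$ always, $\mathrm{PF}(S)$ is the set of $\leq_S$-maximal gaps, the special gaps (elements $x\in\mathrm{PF}(S)$ with $2x\in S$) are exactly the gaps $x$ for which $S\cup\{x\}$ is a numerical semigroup, and your key lemma --- irreducibility is equivalent to $\mathrm{F}(S)$ being the only special gap --- is proved correctly in both directions. The implications $\mathrm{t}(S)=1\Rightarrow S$ symmetric and $\mathrm{PF}(S)=\{\mathrm{F}(S)/2,\mathrm{F}(S)\}\Rightarrow S$ pseudo-symmetric are complete, as is the counting argument linking the gap conditions ($\mathrm{F}(S)-x\in S$ for every gap $x$, resp.\ for every gap $x\neq\mathrm{F}(S)/2$) to items 2 and 4.

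The genuine gap is in the converse halves of items 1 and 3, which you dismiss with ``the converse reverses these implications, the symmetric condition being recovered from irreducibility via the lemma.'' This does not come for free: your lemma controls only pseudo-Frobenius numbers $x$ with $2x\in S$, and says nothing about a possible $x\in\mathrm{PF}(S)\setminus\{\mathrm{F}(S)\}$ with $2x\notin S$; ruling those out is precisely what ``symmetric $\Rightarrow\mathrm{t}(S)=1$'' requires, and it is the substantive half of the proposition. The standard repair: let $x$ be maximal among the gaps with $\mathrm{F}(S)-x\notin S$ (excluding $x=\mathrm{F}(S)/2$ in the pseudo-symmetric case); this set is invariant under $x\mapsto\mathrm{F}(S)-x$, so the maximal element satisfies $2x\geq\mathrm{F}(S)$, parity excludes $2x=\mathrm{F}(S)$, hence $2x\in S$; moreover, if $x+s\notin S$ for some $s\in S\setminus\{0\}$, then $x+s$ is a gap greater than $x$ (and $\neq\mathrm{F}(S)/2$), so by maximality $\mathrm{F}(S)-x-s\in S$ and thus $\mathrm{F}(S)-x\in S$, a contradiction; therefore $x\in\mathrm{PF}(S)$ and $x$ is a special gap different from $\mathrm{F}(S)$, contradicting your lemma. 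For item 1 alone you could instead note that $x\in\mathrm{PF}(S)$ with $2x\notin S$ forces $2x\in\mathrm{PF}(S)$ and iterate doubling: when $\mathrm{F}(S)$ is odd the chain $x,2x,4x,\ldots$ never reaches $\mathrm{F}(S)$, contradicting finiteness of $\mathrm{PF}(S)$. But beware that this shortcut fails for item 3, since a chain $\mathrm{F}(S)/4,\ \mathrm{F}(S)/2,\ \mathrm{F}(S)$ terminates without contradiction; some form of the maximal-gap argument is genuinely needed there. As written, the ``only if'' halves of items 1 and 3 --- and hence of items 2 and 4, which you reduce to them --- remain unproven.
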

	
	Let us observe that Proposition~\ref{prop-18} characterises the MANS-semigroups with embedding dimension equal to three that are symmetric. Note also that, from Example~\ref{exmp-16a} and Proposition~\ref{prop-20}, we know that $S=\langle 5,6,13 \rangle$ is a MANS-semigroup with embedding dimension equal to three that is pseudo-symmetric. We now propose to characterise this class of semigroups.
	
	\begin{proposition}\label{prop-22}
		Let $S= \langle m,am+1,bm+t\rangle$ be a three-embedding dimension MANS-semigroup. Then $S$ is pseudo-symmetric if and only if $t=\frac{m+1}{2}$ and $t=\frac{b+1}{a}$.
	\end{proposition}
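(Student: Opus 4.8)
The plan is to convert pseudo-symmetry into a single Diophantine identity via the explicit description of $\mathrm{PF}(S)$ already in hand, and then to pin down $m$ and $b$ by a size estimate followed by a divisibility argument. Throughout put $P_1=(q-1)(bm+t)+(t-1)(am+1)-m$ and $P_2=q(bm+t)+r(am+1)-m$, the two candidate pseudo-Frobenius numbers of Lemma~\ref{lem-17} and Proposition~\ref{prop-21}.

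For necessity, I first record that pseudo-symmetry forces $\mathrm{t}(S)=2$: by Proposition~\ref{prop-20}(3) a pseudo-symmetric semigroup has $\mathrm{PF}(S)=\{\mathrm{F}(S)/2,\mathrm{F}(S)\}$, which has two elements, so Proposition~\ref{prop-18} gives $t\nmid m$; equivalently $r\le t-2$ and we are in case~(2) of Proposition~\ref{prop-21}. Since $P_2=\mathrm{F}(S)=\max\mathrm{Ap}(S,m)-m$ (by Propositions~\ref{prop-06} and \ref{prop-08}), $P_2$ is the larger of the two, so $S$ is pseudo-symmetric if and only if $P_1=P_2/2$, i.e. $2P_1=P_2$. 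Collecting terms turns this into
\begin{equation*}
(q-2)(bm+t)+(2t-2-r)(am+1)=m. \tag{$\ast$}
\end{equation*}
Now I would bound $q$. Because $r\le t-2$, the coefficient $2t-2-r\ge t\ge 2$, so the second summand in $(\ast)$ is at least $am+1>m$; hence if $q\ge 2$ the left-hand side (whose first summand is then nonnegative) would exceed $m$, a contradiction. Since $q=\lfloor (m-1)/t\rfloor\ge 1$ (as $t\le m-1$), this leaves $q=1$.

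With $q=1$ we have $m-1=t+r$, so $r=m-1-t$ and $2t-2-r=3t-1-m$, while $r\le t-2$ reads $t+1\le m\le 2t-1$. Substituting into $(\ast)$ and grouping the multiples of $m$ yields
\begin{equation*}
m\bigl(a(3t-1-m)-b-2\bigr)=1-2t,
\end{equation*}
so that $m\mid (2t-1)$. This is the crux: $2t-1$ is odd, so any proper divisor of it is at most $(2t-1)/3<t$, and therefore the only divisor in the range $t+1\le m\le 2t-1$ is $2t-1$ itself. Hence $m=2t-1$, i.e. $t=\frac{m+1}{2}$. Feeding this back (note $3t-1-m=t$) gives $at-b-2=-1$, that is $b=at-1$, i.e. $t=\frac{b+1}{a}$, completing necessity.

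For sufficiency I would reverse the computation. Assuming $m=2t-1$ and $b=at-1$, one checks directly that $q=\lfloor (2t-2)/t\rfloor=1$ and $r=(2t-2)\bmod t=t-2$, so $r\neq t-1$, hence $t\nmid m$ and Proposition~\ref{prop-21}(2) applies. A short computation gives $2P_1-P_2=m(ta-b-1)=0$, so $P_1=P_2/2=\mathrm{F}(S)/2$ and $\mathrm{PF}(S)=\{\mathrm{F}(S)/2,\mathrm{F}(S)\}$; Proposition~\ref{prop-20}(3) then yields that $S$ is pseudo-symmetric. The only delicate point of the whole argument is the divisibility step $m\mid(2t-1)$ combined with the interval $t+1\le m\le 2t-1$ to force $m=2t-1$; everything else is bookkeeping with Propositions~\ref{prop-18}, \ref{prop-20} and \ref{prop-21}.
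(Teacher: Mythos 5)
Your proof is correct, and it takes a genuinely different route from the paper's. The paper uses the arithmetic characterisation of pseudo-symmetry $\mathrm{F}(S)=2\mathrm{g}(S)-2$ (condition 4 of Proposition~\ref{prop-20}), substitutes the formulas of Propositions~\ref{prop-09} and \ref{prop-12}, and collapses the resulting identity into the factored equation $q(t-1-r)(ta-b)=1$; since $q$, $t-1-r$ and $ta-b$ are nonnegative integers, each factor must equal $1$, and the two stated conditions drop out simultaneously. You instead use the set characterisation $\mathrm{PF}(S)=\left\{\frac{\mathrm{F}(S)}{2},\mathrm{F}(S)\right\}$ (condition 3 of Proposition~\ref{prop-20}) together with the explicit pseudo-Frobenius description of Lemma~\ref{lem-17} and Propositions~\ref{prop-18} and \ref{prop-21}: pseudo-symmetry forces type $2$, hence $t\nmid m$, hence $2P_1=P_2$, which is your identity $(\ast)$; you then pin the parameters down in stages, a size estimate giving $q=1$, the divisibility $m\mid(2t-1)$ combined with the interval $t+1\le m\le 2t-1$ (and the fact that a proper divisor of the odd number $2t-1$ is at most $\frac{2t-1}{3}<t$) giving $m=2t-1$, and back-substitution giving $b=at-1$. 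Each step checks out: equation $(\ast)$ is the correct expansion of $2P_1=P_2$, the bound $2t-2-r\ge t$ does follow from $r\le t-2$, and the sufficiency computation $2P_1-P_2=m(ta-b-1)$ is right. What the paper's route buys is economy: a single factorisation yields everything at once, with no case analysis and no appeal to Propositions~\ref{prop-18} or \ref{prop-21}. What your route buys is that it avoids the genus formula of Proposition~\ref{prop-12} altogether and makes the structural meaning transparent --- pseudo-symmetry says exactly that the smaller pseudo-Frobenius number is half the Frobenius number --- at the price of the extra number-theoretic pinching argument for $m$.
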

	
	\begin{proof}
		From Propositions~\ref{prop-09}, \ref{prop-12}, and \ref{prop-20}, $S$ is pseudo-symmetric if and only if $r(am+1)+q(bm+t)$ is equal to
		\[ \frac{qt(t-1)+r(r+1)}{m}(am+1) + \frac{qt(q-1)+2q(r+1)}{m}(bm+t) -1, \]
		where $q=\left\lfloor \frac{m-1}{t} \right\rfloor$ and $r=(m-1)\bmod{t}$.
		Since
		\[ (mr-qt(t-1)-r(r+1))(am+1) + (qm-qt(q-1)-2q(r+1))(bm+t) = \]
		\[ ((m-r-1)r-qt(t-1))(am+1) + q(m-t(q-1)-2(r+1))(bm+t) = \]
		\[ qt(r-t+1)(am+1) + q(t-1-r)(bm+t) = q(t-1-r)(ta-b)(-m), \]
		we deduce that $S$ is pseudo-symmetric if and only if $q(t-1-r)(ta-b)=1$. Finally, since $q, t-1-r, ta-b\in\mathbb{N}$ (recall that $bm+t<t(am+1)$), we deduce that $q=t-1-r=ta-b=1$ and we get the result by observing that
		\begin{itemize}
			\item $t=\frac{m+1}{2}$ if and only if $q=1$ and $r=t-2$;
			\item $t=\frac{b+1}{a}$ if and only if $ta-b=1$. \qedhere
		\end{itemize} 
	\end{proof}
	
	\begin{remark}
		With similar reasoning as in the above proof, we can recover Proposition~\ref{prop-18} using condition 2 of Proposition~\ref{prop-20}.
	\end{remark}
	
	Let us see some examples relate to the above proposition. In particular, from the first three, we conclude that conditions $t=\frac{b+1}{a}$ and $t=\frac{m+1}{2}$ are independent.
	
	\begin{example}\label{exmp-23a}
		Let $S=\langle 5,6,19 \rangle$. Then $m=5$, $a=1$, $b=3$, and $t=4$. Thus, $t=\frac{b+1}{a}$ and $t\neq\frac{m+1}{2}$ ($q=1$, but $r=0\neq t-2$). Note that $S$ is a MANS-semigroup and, since $\mathrm{PF}(S)=\{13,14\}$, it is not pseudo-symmetric.
	\end{example}
	
	\begin{example}\label{exmp-23b}
		Let $S=\langle 5,11,17 \rangle$. Then $m=5$, $a=2$, $b=3$, and $t=2$. Thus, $t=\frac{b+1}{a}$ and $t\neq\frac{m+1}{2}$ ($r=0=t-2$, but $q=2\neq1$). We can easily check that $S$ is a MANS-semigroup and, since $\mathrm{PF}(S)=\{23,29\}$, it is not pseudo-symmetric.
	\end{example}

	\begin{example}\label{exmp-23c}
		Let $S=\langle 5,11,23 \rangle$. Then $m=5$, $a=2$, $b=4$, and $t=3$. Therefore, $t\neq\frac{b+1}{a}$ and $t=\frac{m+1}{2}$. We have that $S$ is a MANS-semigroup and, since $\mathrm{PF}(S)=\{17,29\}$, it is not pseudo-symmetric.
	\end{example}

	\begin{example}\label{exmp-23d}
		Let $S=\langle 5,11,28 \rangle$. Then $m=5$, $a=2$, $b=5$, and $t=3$. Since $t=\frac{b+1}{a}=\frac{m+1}{2}$, $S$ is a pseudo-symmetric MANS-semigroup (note that  $\mathrm{PF}(S)=\{17,34\}$).
	\end{example}

	\begin{example}\label{exmp-23e}
		If $m=3$, then the symmetric MANS-semigroups are of the form $S=\langle 3,3a+1 \rangle$ and the pseudo-symmetric MANS-semigroups of the form $S=\langle 3,3a+1,6a-1 \rangle$, with $a\in\mathbb{N}\setminus\{0\}$ in both cases. Note that, for each $a\in\mathbb{N}\setminus\{0\}$, $6a-1$ is the Frobenius number of $\langle 3,3a+1 \rangle$. 
	\end{example}

	\section{General case}\label{sect-general}
	
	In this section, we analyse the general case of MANS-semigroups, that is, we consider numerical semigroups of arbitrary embedding dimension. 
	
	As stated in Section~\ref{sect-two-dim}, if $S$ is a numerical semigroup with $\mathrm{msg}(S)=\{n_1<n_2<\cdots<n_e\}$, then $\mathrm{m}(S)=n_1$, $\mathrm{r}(S)=n_2$, and $\mathrm{M}(S)=n_e$ are the multiplicity, the ratio, and the greatest minimal generator of $S$, respectively. Moreover, if $S$ is a MANS-semigroup, then $r(S)=am(S)+1$ for some $a\in\mathbb{N}\setminus\{0\}$.
	
	Let us first give a characterisation of MANS-semigroups. We start by seeing how we can add to a MANS-semigroup $S$ a new minimal generator (greater than $\mathrm{M}(S)$) so that we obtain a new MANS-semigroup (with a higher embedding dimension).
	
	\begin{lemma}\label{lem-24}
		Let $S$ be a MANS-semigroup with $\mathrm{msg}(S)=\{n_1<n_2<\cdots<n_e\}$ ($2\leq e\leq n_1-1$) and $\mathrm{Ap}(S,n_1)=\{w(0), w(1), \ldots, w(n_1-1)\}$. If $n_{e+1}\in\mathbb{N}$ fulfil that $n_e<n_{e+1}$, $n_e \bmod n_1 < n_{e+1} \bmod n_1$, and $w(n_{e+1} \bmod n_1-1)<n_{e+1}<w(n_{e+1}\bmod n_1)$, then $S'=\langle n_1,\ldots,n_e,n_{e+1} \rangle$ is a MANS-semigroup with $\mathrm{e}(S')=\mathrm{e}(S)+1$.
	\end{lemma}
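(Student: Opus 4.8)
The plan is to compute the Ap\'ery set $\mathrm{Ap}(S',n_1)$ in terms of $\mathrm{Ap}(S,n_1)$ and then read off both the monotonicity and the embedding dimension from it. Throughout I write $m=n_1$, let $w(\cdot)$ denote the Ap\'ery coordinates of $S$ (so $\mathrm{Ap}(S,m)=\{w(0),\dots,w(m-1)\}$), let $w'(\cdot)$ denote those of $S'$, and set $j=n_{e+1}\bmod m$. First I would record two easy facts: $j\ge 1$ (otherwise $n_{e+1}$ would be a multiple of $m$ and hence lie in $S$), and $n_{e+1}\notin S$, since $n_{e+1}$ has residue $j$ and is strictly smaller than $w(j)$, the least element of $S$ in that residue class; this last point uses the right hypothesis $n_{e+1}<w(j)$.

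The key technical step is the formula
\[ w'(i)=\min_{k\ge 0}\Big\{\,k\,n_{e+1}+w\big((i-kj)\bmod m\big)\,\Big\},\qquad i\in\{0,\dots,m-1\}. \]
The inequality $\le$ is immediate, because each term on the right lies in $S'$ and is congruent to $kj+(i-kj)\equiv i\pmod m$. For $\ge$, I would write an arbitrary element of $S'$ as $k\,n_{e+1}+s$ with $k\ge 0$ and $s\in S$ (possible since $S'=\langle\,S\cup\{n_{e+1}\}\,\rangle$ and $S$ is closed under addition), and then bound $s\ge w((i-kj)\bmod m)$ by the very definition of the Ap\'ery set of $S$. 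The minimum is attained at some finite $k$ because the terms grow without bound in $k$.

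With this formula, monotonicity $w'(i)<w'(i+1)$ for $i\in\{1,\dots,m-2\}$ follows from a short case analysis on the index $k_0$ realising the minimum for $w'(i+1)$, say $w'(i+1)=k_0 n_{e+1}+w(c)$ with $c=(i+1-k_0 j)\bmod m$. If $c\ge 1$, then $(i-k_0 j)\bmod m=c-1$, and comparing the $k=k_0$ terms gives $w'(i)\le k_0 n_{e+1}+w(c-1)<k_0 n_{e+1}+w(c)=w'(i+1)$ by the strict monotonicity of $S$. If instead $c=0$, then necessarily $k_0\ge 1$ (since $i+1\in\{2,\dots,m-1\}$ is not $\equiv 0\bmod m$), and $(i-(k_0-1)j)\bmod m=j-1$; the $k=k_0-1$ term then yields $w'(i)\le (k_0-1)n_{e+1}+w(j-1)<k_0 n_{e+1}=w'(i+1)$, where the strict inequality is exactly the left hypothesis $w(j-1)<n_{e+1}$. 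This wrap-around case, in which the essential lower bound $w(j-1)<n_{e+1}$ is consumed, is the step I expect to be the crux of the argument; the rest is bookkeeping with the formula.

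Finally, for the embedding dimension I would show $\mathrm{msg}(S')=\{n_1,\dots,n_e,n_{e+1}\}$. Each $n_i$ with $i\le e$ remains a minimal generator, because any expression of $n_i$ using $n_{e+1}$ would be at least $n_{e+1}>n_e\ge n_i$, which is impossible, so $n_i\notin\langle\,\mathrm{msg}(S')\setminus\{n_i\}\,\rangle$; here the first hypothesis $n_e<n_{e+1}$ is used. And $n_{e+1}$ is minimal because $n_{e+1}\notin S=\langle n_1,\dots,n_e\rangle$, as already noted. Hence $\mathrm{e}(S')=e+1$. I would also remark that the middle hypothesis $n_e\bmod m<j$ is in fact forced by the others: since $n_e\in\mathrm{Ap}(S,m)$ we may write $n_e=w(n_e\bmod m)$, and then $n_e<n_{e+1}<w(j)$ together with the strict monotonicity of $S$ gives $n_e\bmod m<j$.
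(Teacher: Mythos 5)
Your proposal is correct and follows essentially the same route as the paper's proof: both decompose the elements of $\mathrm{Ap}(S',n_1)$ as $k\,n_{e+1}+w(\cdot)$, compare consecutive elements by reusing the same coefficient $k$ when the residue does not wrap around (invoking the monotonicity of $\mathrm{Ap}(S,n_1)$), and handle the wrap-around case $c=0$ by dropping to $k-1$ and residue $j-1$, which is exactly where the hypothesis $w(n_{e+1}\bmod n_1-1)<n_{e+1}$ is consumed. Your explicit $\min$-formula and the observation that the hypothesis $n_e\bmod n_1<n_{e+1}\bmod n_1$ is redundant are nice additions, but the underlying argument is the paper's.
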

	
	\begin{proof}
		From condition $n_{e+1}<w(n_{e+1}\bmod n_1)$, we deduce that  $n_{e+1}\not\in S$ and, since $n_e<n_{e+1}$, then $\mathrm{msg}(S')=\{n_1<n_2<\cdots<n_e<n_{e+1}\}$. Therefore, $\mathrm{e}(S')=\mathrm{e}(S)+1$.
		
		Let $\mathrm{Ap}(S',n_1)=\{w'(0), w'(1), \ldots, w'(n_1-1)\}$. By the construction of $S'$, it is clear that $w'(i)\leq w(i)$ for all $i\in\{0,1,\ldots,n_1-1\}$. To prove that $S'$ is a MANS-semigroup, we analyse what happens between two consecutive elements of $\mathrm{Ap}(S',n_1)$. We will consider two cases, taking $i\geq 1$.
		\begin{enumerate}
			\item If $w'(i) \in S$, then $w'(i)=w(i)$ and, therefore, $w'(i-1)\leq w(i-1)< w(i) = w'(i)$.
			\item If $w'(i) \not\in S$, then $w'(i)=kn_{e+1}+w(j)$ for some $k\in\mathbb{N}\setminus\{0\}$ and some $j\in\{0,\ldots,n_1-1\}$. Once again, we distinguish two cases.
			\begin{enumerate}
				\item If $j\neq 0$, then $w'(i-1) \leq kn_{e+1}+w(j-1) < kn_{e+1}+w(j) = w'(i)$.
				\item If $j=0$, the $w'(i)=kn_{e+1}$ and, consequently, $w'(i-1) \leq (k-1)n_{e+1}+w(n_{e+1}\bmod n_1-1) < kn_{e+1} = w'(i)$. \qedhere
			\end{enumerate}
		\end{enumerate}
	\end{proof}
	
	Let us now see that if we remove the greatest minimal generator of a MANS-semigroup $S$, we get a new MANS-semigroup (with a less embedding dimension).
	
	\begin{lemma}\label{lem-25}
		Let $S$ be a MANS-semigroup with $\mathrm{msg}(S)=\{n_1<n_2<\cdots<n_e<n_{e+1}\}$ ($e\geq 2$). Then $S'= \langle n_1,n_2,\ldots,n_e \rangle$ is a MANS-semigroup with $\mathrm{e}(S')=\mathrm{e}(S)-1$.
	\end{lemma}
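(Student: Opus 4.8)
The plan is to prove the two structural assertions directly and then establish monotonicity of the Ap\'ery set of $S'$ by a case analysis patterned on the proof of Lemma~\ref{lem-24}, using the inequality $w'(i)\ge w(i)$ together with the MANS property of $S$ to handle the one delicate case.

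First I would record the easy facts. Write $m=n_1$ and $c=n_{e+1}$. Since $e\ge 2$ and $S$ is a MANS-semigroup, Lemma~\ref{lem-001} gives $n_2=am+1$ for some $a\ge 1$, so $\gcd(n_1,n_2)=\gcd(m,am+1)=1$ and hence $\gcd\{n_1,\dots,n_e\}=1$; thus $S'$ is a numerical semigroup with $\mathrm{m}(S')=m$. Moreover, since $\{n_1,\dots,n_{e+1}\}$ is the unique minimal system of generators of $S$, none of $n_1,\dots,n_e$ lies in the monoid generated by the remaining generators, so a fortiori none lies in $\langle\{n_1,\dots,n_e\}\setminus\{n_l\}\rangle$. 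Hence $\{n_1,\dots,n_e\}=\mathrm{msg}(S')$ and $\mathrm{e}(S')=e=\mathrm{e}(S)-1$.

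For the monotonicity, let $\mathrm{Ap}(S,m)=\{w(0),\dots,w(m-1)\}$ and $\mathrm{Ap}(S',m)=\{w'(0),\dots,w'(m-1)\}$. Since $S'\subseteq S$, each $w'(i)$ is an element of $S$ congruent to $i$, whence $w'(i)\ge w(i)$, with equality precisely when $w(i)\in S'$. The goal is $w'(i-1)<w'(i)$ for $i\in\{1,\dots,m-1\}$. The main tool is the ratio $n_2=am+1\equiv 1\pmod m$: if $w'(i)-n_2\in S'$, then it is an element of $S'$ congruent to $i-1$, so $w'(i-1)\le w'(i)-n_2$, while $w'(i-1)+n_2\in S'$ is congruent to $i$, giving $w'(i)\le w'(i-1)+n_2$; hence $w'(i)=w'(i-1)+n_2>w'(i-1)$. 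Since no element of $\mathrm{Ap}(S',m)$ can be written using $m$ (otherwise $w'(i)-m\in S'$ would be a smaller element of the same residue), the only indices left are those for which every representation of $w'(i)$ avoids both $m$ and $n_2$, that is, $w'(i)\in\langle n_3,\dots,n_e\rangle$. Note this situation is vacuous when $e=2$, where the statement reduces to Proposition~\ref{prop-002}.

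For such an exceptional index $i$ I would bring in the monotonicity of $S$: as $w'(i)\in S$ we have $w'(i)\ge w(i)>w(i-1)$, the last inequality being the MANS property of $S$. It therefore suffices to show $w(i-1)=w'(i-1)$, i.e.\ $w(i-1)\in S'$, since then $w'(i-1)=w(i-1)<w(i)\le w'(i)$. This is the crux, and the main obstacle: I must show that if the least element of $S'$ of residue $i$ avoids the ratio, then the least element of $S$ of residue $i-1$ does not require the top generator $c$. I expect to argue by contradiction. If $w(i-1)\notin S'$, every representation of $w(i-1)$ uses $c$, and a minimality argument yields $w(i-1)=c+w\big((i-1-k)\bmod m\big)$ with $k=c\bmod m$; by Lemma~\ref{lem-000}, $k$ is the \emph{largest} residue among the generators, whereas the residue $i$ is realised in $S'$ using only the strictly smaller residues $n_3\bmod m<\dots<n_e\bmod m<k$. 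The work is to convert this residue-versus-size tension into a contradiction. I plan to make it precise through the non-decreasing ``level'' function $\ell_S(i)=(w(i)-i)/m$ (for which the MANS condition is exactly $\ell_S(0)\le\ell_S(1)\le\dots\le\ell_S(m-1)$), tracking that subtracting $c$ shifts the residue by $-k$, and reading off the incompatibility from this monotonicity; the failure of monotonicity of $S$ is precisely what excludes the offending configuration.
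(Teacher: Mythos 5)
Your structural assertions, your treatment of $e=2$, and your first case (some representation of $w'(i)$ in $S'$ uses the ratio $n_2$, forcing $w'(i)=w'(i-1)+n_2$) are all correct, and they do reduce the lemma to the indices $i$ with $w'(i)\in\langle n_3,\ldots,n_e\rangle$. But the step you yourself flag as the crux is left as a plan, and that plan cannot be carried out: the claim it rests on --- that for such an exceptional index one has $w(i-1)\in S'$, hence $w'(i-1)=w(i-1)$ --- is false. The paper's own Example~\ref{exmp-29} is a counterexample. Take $S=\langle 13,27,55,96\rangle$, a MANS-semigroup with $\mathrm{msg}(S)=\{13,27,55,96\}$, so $m=13$, $n_2=27$, $c=96$, $k=c\bmod m=5$, and $S'=\langle 13,27,55\rangle$. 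For $i=6$ one has $w'(6)=110=2\cdot 55$, and $110-27=83\notin S'$, so every representation of $w'(6)$ avoids the ratio and $i=6$ is exceptional; nevertheless $w(5)=96=c\notin S'$ (it is exactly the removed minimal generator, since here $i-1\equiv c\pmod m$), and indeed $w'(5)=109\neq w(5)$. Thus your chain $w'(i-1)=w(i-1)<w(i)\le w'(i)$ breaks at its first link, and the contradiction you hope to extract from ``$w(i-1)\notin S'$'' does not exist: the offending configuration genuinely occurs, so no bookkeeping with the level function $\ell_S$ can exclude it. (The lemma still holds there, of course: $109<110$; but not for your reason.)

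The repair --- and the paper's actual argument --- is never to pin down $w'(i-1)$ exactly, only to bound it by an element built from a decomposition of $w'(i)$. Write $w'(i)=n_k+w'(j)$, where $n_k$ ($2\le k\le e$) is any minimal generator occurring in a representation of $w'(i)$ and $j=(i-n_k)\bmod n_1$. Then $w'(n_k\bmod n_1-1)+w'(j)$ lies in $S'$ and is congruent to $i-1$ modulo $n_1$, so $w'(i-1)\le w'(n_k\bmod n_1-1)+w'(j)$. Moreover $w'(n_k\bmod n_1-1)<n_k$: since $n_k\bmod n_1-1<n_{e+1}\bmod n_1$ by Lemma~\ref{lem-000}, one has $w'(l)=w(l)$ for $l=n_k\bmod n_1-1$ (if every representation of $w(l)$ used $n_{e+1}$, then $w(l)=n_{e+1}+w(j')$ with $j'=(l-n_{e+1})\bmod n_1>l$, contradicting the monotonicity of $\mathrm{Ap}(S,n_1)$), and then $w(n_k\bmod n_1-1)<w(n_k\bmod n_1)=n_k$ by the MANS property of $S$. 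Altogether $w'(i-1)<n_k+w'(j)=w'(i)$, uniformly, with no case distinction on whether the ratio occurs. In your problematic instance this reads $w'(5)\le w'(2)+w'(3)=54+55=109<110=w'(6)$.
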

	
	\begin{proof}
		The equality $\mathrm{e}(S')=\mathrm{e}(S)-1$ is trivial by the construction of $S'$.
		
		From Lemma~\ref{lem-001}, we know that if $e=2$, then $S' = \langle n_1,n_2 \rangle = \langle n_1,an_1+1 \rangle$ for some $a\in\mathbb{N}\setminus\{0\}$. By Proposition~\ref{prop-002}, $S'$ is a MANS-semigroup.
		
		Now, let $e\geq 3$, $\mathrm{Ap}(S,n_1)=\{w(0), w(1), \ldots, w(n_1-1)\}$ and $\mathrm{Ap}(S',n_1)=\{w'(0), w'(1), \ldots, w'(n_1-1)\}$. To prove that $S'$ is a MANS-semigroup, we analyse what happens between two consecutive elements of $\mathrm{Ap}(S',n_1)$. 
		
		Let $i\geq 2$. It is clear that there exists $k\in\{2,3,\ldots,e\}$ such that $w'(i)=n_k+w'(j)$, where $j=(n_k-i)\bmod n_1$. Thus, $n_k \bmod n_1 -1 + j \equiv (i-1)\pmod{n_1}$ and, therefore, $w'(i-1) \leq w'(n_k \bmod n_1 -1) + w'(j)$. Now, since $S$ is MANS-semigroup, then $w'(n_k \bmod n_1 -1) < w'(n_k \bmod n_1)=n_k$. In conclusion, $w'(i-1) \leq w'(j)+w'(n_k \bmod n_1 -1) < w'(j)+n_k = w'(i)$.
	\end{proof}
	
	The following result follows immediately from the above lemma.
	
	\begin{corollary}\label{cor-26}
		Let $S$ be a MANS-semigroup with $\mathrm{msg}(S)=\{n_1<n_2<\cdots<n_e<n_{e+1}\}$ ($e\geq 2$). Then $S'= \langle n_1,n_2,\ldots,n_i \rangle$ is a MANS-semigroup with $\mathrm{e}(S')=i$ for all $i\in\{2,\ldots,e\}$.
	\end{corollary}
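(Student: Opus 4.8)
The plan is to obtain the corollary from Lemma~\ref{lem-25} by a finite downward induction on the number of generators, peeling off the greatest minimal generator one step at a time. Write $S_i = \langle n_1, n_2, \ldots, n_i \rangle$ for $i\in\{2,\ldots,e\}$; the goal is to show that each $S_i$ is a MANS-semigroup with $\mathrm{e}(S_i)=i$ and, as a piece of bookkeeping, that its minimal system of generators is exactly $\{n_1<\cdots<n_i\}$.

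First I would record the top case. Since $S$ is a MANS-semigroup whose minimal system of generators $\{n_1<\cdots<n_e<n_{e+1}\}$ has $e+1\geq 3$ elements (because $e\geq 2$), Lemma~\ref{lem-25} applies directly to $S$ and yields that $S_e=\langle n_1,\ldots,n_e\rangle$ is a MANS-semigroup with $\mathrm{e}(S_e)=\mathrm{e}(S)-1=e$. This settles the case $i=e$.

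Then I would run the induction. Suppose, for some $j$ with $3\leq j\leq e$, that $S_j$ is a MANS-semigroup with $\mathrm{e}(S_j)=j$. The key point is that, because $\mathrm{e}(S_j)=j$ and $S_j$ is generated by the $j$ numbers $n_1,\ldots,n_j$, this generating set must be the (unique) minimal system of generators of $S_j$; hence $\mathrm{msg}(S_j)=\{n_1<\cdots<n_j\}$ has exactly $j\geq 3$ elements and its greatest element is $n_j$. Lemma~\ref{lem-25} (with the roles of ``$e+1$'' and ``$e$'' played by $j$ and $j-1$, so that the hypothesis $e\geq 2$ reads $j-1\geq 2$) then applies to $S_j$ and gives that $S_{j-1}=\langle n_1,\ldots,n_{j-1}\rangle$ is a MANS-semigroup with $\mathrm{e}(S_{j-1})=j-1$. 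Descending $j$ from $e$ to $3$ produces the assertion for every $i\in\{2,\ldots,e\}$.

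I do not expect any genuine obstacle, since the statement is flagged as an immediate consequence of Lemma~\ref{lem-25}. The only point requiring care is the indexing: at each stage one must verify that after deleting the top generator the remaining generating set is still minimal, so that the deleted element really is the greatest minimal generator of the current semigroup. This is exactly what the embedding-dimension drop in Lemma~\ref{lem-25} guarantees, and it is what lets the induction proceed cleanly down to $S_2=\langle n_1,n_2\rangle$.
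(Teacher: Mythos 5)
Your proposal is correct and follows exactly the route the paper intends: the paper derives Corollary~\ref{cor-26} by iterating Lemma~\ref{lem-25}, peeling off the greatest minimal generator one step at a time, which is precisely your downward induction. Your extra care in noting that $\mathrm{e}(S_j)=j$ forces $\{n_1<\cdots<n_j\}$ to be the minimal system of generators of $S_j$ (so the lemma can legitimately be reapplied) is exactly the bookkeeping the paper leaves implicit when it calls the result immediate.
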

	
	We can already state the characterisation of the MANS-semigroups.
	
	\begin{theorem}\label{thm-27}
		Let $S$ be a numerical semigroup with $\mathrm{msg}(S)=\{n_1<n_2<\cdots<n_e<n_{e+1}\}$ ($e\geq 2$) and let $S'=\langle n_1<n_2<\cdots<n_e\rangle$ with $\mathrm{Ap}(S',n_1)=\{w'(0), w'(1), \ldots, w'(n_1-1)\}$. Then $S$ is a MANS-semigroup if and only if
		\begin{enumerate}
			\item $S'$ is a MANS-semigroup,
			\item $n_e \bmod n_1 < n_{e+1} \bmod n_1$,
			\item and $w'(n_{e+1} \bmod n_1-1)<n_{e+1}<w'(n_{e+1}\bmod n_1)$.
		\end{enumerate}
	\end{theorem}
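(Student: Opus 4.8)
The statement is an equivalence, and the two implications are essentially carried by Lemmas~\ref{lem-24} and \ref{lem-25}, which already transfer the MANS property when a top generator is added or removed. The plan is therefore to reduce each direction to one of these lemmas, supplying the remaining arithmetic conditions (2) and (3) by a direct comparison of the Ap\'ery sets $\mathrm{Ap}(S,n_1)$ and $\mathrm{Ap}(S',n_1)$.

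For sufficiency, assume (1)--(3). From $\mathrm{msg}(S)=\{n_1<\cdots<n_{e+1}\}$ we have $n_e<n_{e+1}$, and by Proposition~2.10 of \cite{springer} we have $e+1=\mathrm{e}(S)\le\mathrm{m}(S)=n_1$, so $2\le e\le n_1-1$. Conditions (1)--(3) are then exactly the hypotheses of Lemma~\ref{lem-24} applied to the MANS-semigroup $S'$ and the candidate generator $n_{e+1}$, and its conclusion is precisely that $S=\langle n_1,\ldots,n_{e+1}\rangle$ is a MANS-semigroup. So this direction is immediate once the bookkeeping is checked.

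For necessity, assume $S$ is a MANS-semigroup. Condition (1) is Lemma~\ref{lem-25}, and condition (2) is the special case $n_e\bmod n_1<n_{e+1}\bmod n_1$ of Lemma~\ref{lem-000} applied to $S$. The content lies in (3). Put $j=n_{e+1}\bmod n_1$ and write $\mathrm{Ap}(S,n_1)=\{w(0),\ldots,w(n_1-1)\}$. Since $n_{e+1}$ is a minimal generator, $n_{e+1}-n_1\notin S$, so $n_{e+1}\in\mathrm{Ap}(S,n_1)$ and, being the least element of $S$ in its residue class, $w(j)=n_{e+1}$ (here $1\le j\le n_1-1$, since $j=0$ would force $n_1\mid n_{e+1}$, contradicting minimality). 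For the upper bound, $S'\subseteq S$ gives $w'(j)\ge w(j)=n_{e+1}$, while $w'(j)\ne n_{e+1}$ because $n_{e+1}\notin S'$; hence $n_{e+1}<w'(j)$. For the lower bound, I would use $S=\bigcup_{k\ge 0}(kn_{e+1}+S')$ to write $w(j-1)=kn_{e+1}+s'$ with $k\ge 0$ and $s'\in S'$; if $k\ge 1$ then $w(j-1)\ge n_{e+1}=w(j)$, contradicting the monotonicity $w(j-1)<w(j)$ of $\mathrm{Ap}(S,n_1)$, so $k=0$ and $w(j-1)=s'\in S'$, whence $w'(j-1)=w(j-1)<w(j)=n_{e+1}$.

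The main obstacle is the lower bound in (3): one must rule out that deleting $n_{e+1}$ perturbs the residue-$(j-1)$ Ap\'ery element. The decisive point is that monotonicity of $\mathrm{Ap}(S,n_1)$ forbids $w(j-1)$ from using the generator $n_{e+1}$ at all, yielding the identity $w'(j-1)=w(j-1)$; the remaining steps are routine once this is established.
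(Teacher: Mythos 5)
Your proof is correct and takes essentially the same route as the paper: sufficiency is exactly Lemma~\ref{lem-24}, and necessity combines Lemma~\ref{lem-25} for condition (1), the monotonicity of residues of generators for condition (2), and a comparison of the Ap\'ery sets of $S$ and $S'$ for condition (3). Your explicit argument for the lower bound in (3) --- using $S=\bigcup_{k\geq 0}(kn_{e+1}+S')$ and monotonicity of $\mathrm{Ap}(S,n_1)$ to show $w'(j-1)=w(j-1)$ --- merely fills in a step the paper states tersely (``$w'(i)=w(i)$ for all $i<n_{e+1}\bmod n_1$''), so the two proofs coincide in substance.
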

	
	\begin{proof}
		\textit{(Necessity.)} By Lemma~\ref{lem-25}, we know that $S'$ is a MANS-semigroup.
		
		Since $S$ is a MANS-semigroup and $n_e,n_{e+1}\in\mathrm{Ap}(S,n_1)$, we can state that $n_e \bmod n_1 < n_{e+1} \bmod n_1$.
		
		If $\mathrm{Ap}(S,n_1)=\{w(0), w(1), \ldots, w(n_1-1)\}$, since $S$ and $S'$ are MANS-semigroups, we have that $w'(i)=w(i)$ for all $i\in\{0,1,\ldots,n_{e+1} \bmod n_1-1\}$ and, consequently, $w'(n_{e+1} \bmod n_1-1)<n_{e+1}$.
		
		Finally, since $e_{n+1}\in \mathrm{msg}(S)$, we have that $e_{n+1}\not\in S'$, from which it follows that $n_{e+1}<w'(n_{e+1}\bmod n_1)$.
		
		\textit{(Sufficiency.)} That is Lemma~\ref{lem-24}.
	\end{proof}

	\subsection{Ap\'ery sets}\label{apery}
	
	Let $S$ and $S'$ be MANS-semigroups with $\mathrm{msg}(S)=\{n_1<n_2<\cdots<n_e\}$ and $\mathrm{msg}(S')=\{n_1<n_2<\cdots<n_e<n_{e+1}\}$. We now aim to construct $\mathrm{Ap}(S')$ from $\mathrm{Ap}(S)$.
	
	\begin{remark}\label{rem-24}
		Under the conditions of Lemma~\ref{lem-24}, let $t_{e+1}=n_{e+1}\bmod n_1$. Note that if we take $k=\left\lfloor \frac{n_1-1}{t_{e+1}} \right\rfloor+1$, then $k\geq 2$ and $kn_{e+1}\bmod n_1 \leq n_{e+1}\bmod n_1$. Since $S'=\langle n_1<n_2<\cdots<n_e<n_{e+1} \rangle$ is a MANS-semigroup, we deduce that $kn_{e+1}$ cannot appear as a summand in the elements of $\mathrm{Ap}(S',n_1)$.
	\end{remark}
	
	Let us take $K=\left\lfloor \frac{n_1-1}{t_{e+1}} \right\rfloor$, $A_h = \{ w(0),w(1),\ldots,w(n_1-1-ht_{e+1}) \}$, for $h\in\{0,1,\ldots,K\}$, and $\mathrm{Ap}(S,n_1)=\{w(0), w(1), \ldots, w(n_1-1)\}$. From the proof of Lemma~\ref{lem-24} and Remark~\ref{rem-24}, we have that $\mathrm{Ap}(S',n_1)$ is a subset of $B=A_0 \cup (n_{e+1}+A_1) \cup \cdots \cup (Kn_{e+1}+A_K)$ (where, as usual, if $x\in\mathbb{R}$ and $A \subseteq \mathbb{R}$, then $x+A = \{x+a \mid a\in A \} $). Thus if $\mathrm{Ap}(S',n_1)=\{w'(0), w'(1), \ldots, w'(n_1-1)\}$, then it is satisfied that
	\begin{itemize}
		\item $w'(i)=w(i)$ if $0\leq i\leq t_{e+1}-1$,
		\item $w'(i)=\min\{w(i),n_{e+1}+w(i-t_{e+1})\}$ if $t_{e+1}\leq i\leq 2t_{e+1}-1$,
		\item $w'(i)=\min\{w(i),n_{e+1}+w(i-t_{e+1}),2n_{e+1}+w(i-2t_{e+1})\}$ if $2t_{e+1}\leq i\leq 3t_{e+1}-1$,
		\item \ldots
		\item $w'(i)=\min\{w(i),n_{e+1}+w(i-t_{e+1}),2n_{e+1}+w(i-2t_{e+1}),\ldots, (K-1)n_{e+1}+w(i-(K-1)t_{e+1})\}$ if $(K-1)t_{e+1}\leq i\leq Kt_{e+1}-1$,
		\item $w'(i)=\min\{w(i),n_{e+1}+w(i-t_{e+1}),2n_{e+1}+w(i-2t_{e+1}),\ldots, (K-1)n_{e+1}+w(i-(K-1)t_{e+1}),Kn_{e+1}+w(i-Kt_{e+1})\}$ if $Kt_{e+1}\leq i\leq n_1-1$.
	\end{itemize}

	\begin{example}\label{exmp-28}
		Let $S=\langle n_1<n_2 \rangle$ and $S'=\langle n_1<n_2<n_3 \rangle$ be MANS-semigroups. Let us take $K=\left\lfloor \frac{n_1-1}{t_3} \right\rfloor$, with $t_3=n_3\bmod n_1$ and $0\leq t_3< n_1$. Then $\mathrm{Ap}(S,n_1) = \{0,n_2,2n_2,\ldots,(n_1-1)n_2\}$ and, by Proposition~\ref{prop-06}, we have that $\mathrm{Ap}(S',n_1)=\{w'(0), w'(1), \ldots, w'(n_1-1)\}$ with
		\begin{itemize}
			\item $w'(i)=in_2$ if $0\leq i\leq t_3-1$,
			\item $w'(i)=n_3+in_2$ if $t_3\leq i\leq 2t_3-1$,
			\item $w'(i)=2n_3+(i-2t_2)n_2$ if $2t_2\leq i\leq 3t_2-1$,
			\item \ldots
			\item $w'(i)=(K-1)n_3+(i-(K-1)t_2)n_2$ if $(K-1)t_2\leq i\leq Kt_2-1$,
			\item $w'(i)=Kn_3+(i-Kt_2)n_2$ if $Kt_2\leq i\leq n_1-1$.
		\end{itemize}
	\end{example}
	
	\begin{example}\label{exmp-29}
		Let $S_3=\langle 13, 27, 55 \rangle$. Then
		\[ \mathrm{Ap}(S_3,13) = \{0, 27, 54, 55, 82, 109, 110, 137, 164, 165, 192, 219, 220\}. \]
		Therefore, $S_3$ is a MANS-semigroup.
		
		If we take $n_4=96=7\times 13 +5$, then $82<96<109$ and, by Lemma~\ref{lem-24}, $S_4=\langle 13, 27, 55, 96 \rangle$ is a MANS-semigroup. Moreover, $K=\left\lfloor \frac{13-1}{5} \right\rfloor = 2$.
		
		To construct $\mathrm{Ap}(S_4,13)$, we consider the following table.
		\begin{center}
			\begin{tabular}{|c|c|c|}
				$w(i)$ & $96+w(i)$ & $2\cdot96+w(i)$ \\
				\hline \hline
				0 & - & - \\
				27 & - & - \\
				54 & - & - \\
				55 & - & - \\
				82 & - & - \\
				109 & 96+0=96 & - \\
				110 & 96+27=123 & - \\
				137 & 96+54=150 & - \\
				164 & 96+55=151 & - \\
				165 & 96+82=178 & - \\
				192 & 96+109=205 & 192+0=192 \\
				219 & 96+110=206 & 192+27=219 \\
				220 & 96+137=233 & 192+54=246 \\
				\hline
			\end{tabular}
		\end{center}
		Taking the minimum in each line, we conclude that
		\[ \mathrm{Ap}(S_4,13) = \{0, 27, 54, 55, 82, 96, 110, 137, 151, 165, 192, 206, 220 \}. \] 
	\end{example}

	\subsection{The tree of MANS-semigroups with multiplicity and ratio fixed}\label{tree}
	
	Note that if we fix the multiplicity value ($m>1$), then there are infinite MANS-semigrupos $S$ with $\mathrm{m}(S)=m$. In fact, by Proposition~\ref{prop-002}, we have that $S=\langle m, am+1 \rangle$ is a MANS-semigroup for any $a\in\mathbb{N}\setminus\{0\}$. Incidentally, $\mathbb{N}$ is the unique MANS-semigroup with multiplicity $m=1$. 
	
	However, if we fix the multiplicity ($m>1$) and the ratio ($r>2$), then the set $\mathcal{MA}(m,r)= \{ S \mid S \mbox{ is a MANS-semigroup, } \mathrm{m}(S)=m, \mbox{ and } \mathrm{r}(S)= r\}$ is finite. Indeed, by Corollary~\ref{cor-26}, it is clear that every element of $\mathcal{MA}(m,r)$ must contain the numerical semigroup $\langle m, r \rangle$. Now, since $\mathbb{N}\setminus \langle m, r \rangle$ is finite, we conclude that $\mathcal{MA}(m,r)$ has finitely many elements.
	
	Since we now want to find all the elements of $\mathcal{MA}(m,r)$, we will endow $\mathcal{MA}(m,r)$ with a tree structure.
	
	Recall that a \textit{directed graph} $G$ is a pair $(V,E)$ where $V$ is a non-empty set and $E$ is a subset of $\{ (u,v) \in V\times V \mid u\not= v \}$. The elements of $V$ and $E$ are called \textit{vertices} and \textit{edges}, respectively. A \textit{path}, of length $n$, connecting the vertices $u,v\in G$ is a
	sequence of distinct edges of the form $(v_0,v_1), (v_1,v_2), \ldots, (v_{n-1},v_n)$ such that $v_0=u$ and $v_n=v$.
	
	A directed graph $G$ is a \textit{tree} if there exists a vertex $r$ (known as the \textit{root} of $G$) such that for any other vertex $v\in G$, there exists a unique path connecting $v$ and $r$. Moreover, if $(u,v)$ is an edge of the tree, then $u$ is said to be a \textit{child} of $v$.
	
	To define the tree $G(\mathcal{MA}(m,r))$, we take $\mathcal{MA}(m,r)$ as the set of vertices and say that $(T,S)\in \mathcal{MA}(m,r)\times \mathcal{MA}(m,r)$ is an edge of $G(\mathcal{MA}(m,r))$ if and only if $\mathrm{msg}(S)=\mathrm{msg}(T) \setminus\{\mathrm{M}(T)\}$. 
	
	Given $S \in \mathcal{MA}(m,r)$, we define the following sequence: $S_0=S$ and
	\[ S_{n+1} = \left\{ \begin{array}{l}
		\langle \mathrm{msg}(S_n) \setminus \{\mathrm{M}(S_n)\} \rangle \mbox{ if } S_n\not=\langle m, r \rangle, \\[2pt]
		\langle m, r \rangle \mbox{ otherwise.}
	\end{array}\right. \] 
	
	From Lemma~\ref{lem-25}, we deduce the following result.
		
	\begin{proposition}\label{prop-30}
		If $S \in \mathcal{MA}(m,r)$ and $\{S_n\mid n\in \mathbb{N}\}$ is the sequence defined above, then $S_n\in \mathcal{MA}(m,r)$ for all $n\in\mathbb{N}$. Moreover, $S_k=\langle m, r \rangle$ for all $k\geq\mathrm{e}(S)-2$.
	\end{proposition}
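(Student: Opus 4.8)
The plan is to prove both assertions by a single induction on $n$, using Lemma~\ref{lem-25} as the engine that keeps the sequence inside $\mathcal{MA}(m,r)$ while lowering the embedding dimension by one at each genuine step.

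First I would establish that $S_n\in\mathcal{MA}(m,r)$ for all $n$. The base case $S_0=S\in\mathcal{MA}(m,r)$ holds by hypothesis. For the inductive step, assume $S_n\in\mathcal{MA}(m,r)$. If $S_n=\langle m,r\rangle$, then by the second branch of the definition $S_{n+1}=\langle m,r\rangle$, which lies in $\mathcal{MA}(m,r)$ by Proposition~\ref{prop-002} (recall that $r=am+1$ for some $a\in\mathbb{N}\setminus\{0\}$, so $\langle m,r\rangle$ is indeed a MANS-semigroup with multiplicity $m$ and ratio $r$). If instead $S_n\neq\langle m,r\rangle$, then since $S_n$ is a MANS-semigroup with $\mathrm{m}(S_n)=m$ and $\mathrm{r}(S_n)=r$, its minimal system of generators properly contains $\{m,r\}$, whence $\mathrm{e}(S_n)\geq 3$ and $\mathrm{msg}(S_n)=\{m<r<\cdots<\mathrm{M}(S_n)\}$. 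Lemma~\ref{lem-25} then applies and shows that $S_{n+1}=\langle\mathrm{msg}(S_n)\setminus\{\mathrm{M}(S_n)\}\rangle$ is a MANS-semigroup with $\mathrm{e}(S_{n+1})=\mathrm{e}(S_n)-1$. Since deleting the greatest minimal generator leaves the two smallest generators $m$ and $r$ untouched, we get $\mathrm{m}(S_{n+1})=m$ and $\mathrm{r}(S_{n+1})=r$, so $S_{n+1}\in\mathcal{MA}(m,r)$.

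For the second assertion I would track the embedding dimension. The argument above shows that whenever $S_n\neq\langle m,r\rangle$ one has $\mathrm{e}(S_{n+1})=\mathrm{e}(S_n)-1$, while $\langle m,r\rangle$ is the unique element of $\mathcal{MA}(m,r)$ of embedding dimension $2$ (in that case $\mathrm{msg}$ consists exactly of the multiplicity and the ratio). A straightforward induction then gives $\mathrm{e}(S_n)=\mathrm{e}(S)-n$ for every $n$ with $0\leq n\leq\mathrm{e}(S)-2$, so $S_{\mathrm{e}(S)-2}$ has embedding dimension $2$ and therefore equals $\langle m,r\rangle$. Once the sequence reaches $\langle m,r\rangle$ it is constant by definition, and hence $S_k=\langle m,r\rangle$ for all $k\geq\mathrm{e}(S)-2$.

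I do not expect a substantive obstacle here: the entire content is carried by Lemma~\ref{lem-25}, and the only point demanding (minimal) care is the observation that removing the largest minimal generator of a MANS-semigroup of embedding dimension at least three preserves both the multiplicity and the ratio. This is immediate from the ordering $m<r<\cdots<\mathrm{M}$ of the minimal generators, so the proof reduces to bookkeeping on the embedding dimension.
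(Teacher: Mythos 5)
Your proof is correct and follows exactly the route the paper intends: the paper derives Proposition~\ref{prop-30} directly from Lemma~\ref{lem-25} (with the corner cases you spell out --- preservation of multiplicity and ratio, the role of Proposition~\ref{prop-002} for $\langle m,r\rangle$, and the bookkeeping on embedding dimension --- left implicit). Your write-up is simply a more detailed version of that same argument.
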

	
	As a consequence of Proposition~\ref{prop-30}, we have the following result.
	
	\begin{proposition}\label{prop-31}
		$G(\mathcal{MA}(m,r))$ is a tree with root $\langle m, r \rangle$.
	\end{proposition}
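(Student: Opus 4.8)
The plan is to verify the two defining properties of a tree with a designated root: that $\langle m, r \rangle$ is genuinely a vertex of $G(\mathcal{MA}(m,r))$, and that every other vertex is joined to it by a unique directed path. First I would check that $\langle m, r \rangle$ lies in $\mathcal{MA}(m,r)$: writing $r = am+1$ (which must hold, by Lemma~\ref{lem-001}, for the set to be nonempty), Proposition~\ref{prop-002} shows $\langle m, r \rangle$ is a MANS-semigroup, and it plainly has multiplicity $m$ and ratio $r$. So the candidate root is a vertex.

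For the existence of a path I would invoke Proposition~\ref{prop-30}: given $S \in \mathcal{MA}(m,r)$, the sequence $S_0 = S, S_1, S_2, \ldots$ stays inside $\mathcal{MA}(m,r)$ and satisfies $S_k = \langle m, r \rangle$ for all $k \geq \mathrm{e}(S) - 2$. By the definition of the sequence together with Lemma~\ref{lem-25}, as long as $S_n \neq \langle m, r \rangle$ the generators remaining after deleting $\mathrm{M}(S_n)$ are exactly $\mathrm{msg}(S_{n+1})$ (the embedding dimension drops by one, so no spurious minimal generator appears), whence $\mathrm{msg}(S_{n+1}) = \mathrm{msg}(S_n) \setminus \{\mathrm{M}(S_n)\}$ and $(S_n, S_{n+1})$ is an edge. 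Because the embedding dimension strictly decreases until the root is reached, the vertices $S_0, \ldots, S_{\mathrm{e}(S)-2}$ are pairwise distinct, so these edges are distinct and form a path of length $\mathrm{e}(S) - 2$ connecting $S$ with $\langle m, r \rangle$.

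The core of the argument is uniqueness, which I would reduce to the observation that every non-root vertex has a single outgoing edge. Indeed, an edge issuing from $S$ is a pair $(S, T)$ with $\mathrm{msg}(T) = \mathrm{msg}(S) \setminus \{\mathrm{M}(S)\}$; since $\mathrm{M}(S)$ is determined by $S$ and a numerical semigroup is determined by its generating set, the semigroup $T = \langle \mathrm{msg}(S) \setminus \{\mathrm{M}(S)\} \rangle$ is forced, so there is exactly one such edge, and it coincides with the edge $(S_0, S_1)$ above. Moreover the root has no outgoing edge, because $\{m, r\} \setminus \{r\} = \{m\}$ and $\langle m \rangle$ is not a numerical semigroup when $m \geq 2$, hence not a vertex. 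Now any path starting at $S$ must begin with this forced first edge, then continue from $S_1$ with its forced edge, and so on; since each edge strictly lowers the embedding dimension, the path is finite and must terminate at the only vertex without an outgoing edge, namely $\langle m, r \rangle$. Thus the path coinciding with $(S_0, S_1), (S_1, S_2), \ldots$ is the unique path from $S$ to the root, and $G(\mathcal{MA}(m,r))$ is a tree with root $\langle m, r \rangle$.

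I do not anticipate a serious obstacle; the only point requiring care is confirming that deleting the maximal generator returns precisely the minimal system of generators of the child (so that the edge relation is genuinely satisfied rather than merely $S_{n+1} = \langle \mathrm{msg}(S_n) \setminus \{\mathrm{M}(S_n)\} \rangle$), which is exactly what the embedding-dimension count in Lemma~\ref{lem-25} guarantees.
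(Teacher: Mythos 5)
Your proof is correct and follows essentially the same route as the paper, which derives the result directly from Proposition~\ref{prop-30} (the deletion sequence $S_0, S_1, \ldots$ staying in $\mathcal{MA}(m,r)$ and reaching $\langle m, r \rangle$). The details you add beyond that citation --- that $\langle m,r\rangle$ is itself a vertex via Lemma~\ref{lem-001} and Proposition~\ref{prop-002}, and that uniqueness of the path follows because each non-root vertex has exactly one outgoing edge (forced by $\mathrm{msg}(T)=\mathrm{msg}(S)\setminus\{\mathrm{M}(S)\}$, with Lemma~\ref{lem-25} guaranteeing this edge exists) while the root has none --- are precisely the routine verifications the paper leaves implicit.
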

	
	Note that, from its root, we can recurrently build a tree by connecting each vertex to its children through the corresponding edges. Thus, if we know the children of any vertex in $G(\mathcal{MA}(m,r))$, then we can build the tree and, moreover, find all the elements of $\mathcal{MA}(m,r)$.
	
	Let $S=\langle n_1<n_2<\cdots<n_e \rangle$ be a MANS-semigroup with $e\geq 2$. Moreover, let $\mathrm{Ap}(S,n_1) = \{w(0),w(1),\ldots,w(n_1-1)\}$. We will say that $n\in \mathbb{N}$ is \textit{suitably monotone for $S$} if it fulfils the next three conditions:
	\begin{enumerate}
		\item $n_e<n$.
		\item $n_e\bmod n_1 < n\bmod n_1$.
		\item $w(n \bmod n_1-1)<n<w(n\bmod n_1)$.
	\end{enumerate}
	
	From Lemma~\ref{lem-24} and Proposition~\ref{prop-31}, we deduce the result that allows us to recurrently build $G(\mathcal{MA}(m,r))$.
	
	\begin{theorem}\label{thm-32}
		 Let $S=\langle n_1<n_2<\cdots<n_e \rangle\in\mathcal{MA}(m,r)$ be any vertex of $G(\mathcal{MA}(m,r))$. Then the children of $S$ are the numerical semigroups $T_n=\langle n_1,n_2,\ldots,n_e,n\rangle$, where $n$ is suitably monotone for $S$. 
	\end{theorem}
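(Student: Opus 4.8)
The plan is to reduce the statement to the characterisation already obtained in Theorem~\ref{thm-27} together with Lemma~\ref{lem-24}, by translating the graph-theoretic notion of \emph{child} into the purely arithmetic notion of \emph{suitably monotone element}. First I would unfold the definition of an edge: by construction, a vertex $T\in\mathcal{MA}(m,r)$ is a child of $S$ precisely when $\mathrm{msg}(S)=\mathrm{msg}(T)\setminus\{\mathrm{M}(T)\}$. Writing $n=\mathrm{M}(T)$, this says $\mathrm{msg}(T)=\{n_1,n_2,\ldots,n_e,n\}$ with $n>n_e$, so every child of $S$ is of the form $T_n=\langle n_1,\ldots,n_e,n\rangle$ for some integer $n>n_e$ that is a minimal generator of $T_n$, and conversely any such $T_n$ that lies in $\mathcal{MA}(m,r)$ is a child of $S$.

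The next point to verify is that membership $T_n\in\mathcal{MA}(m,r)$ imposes no extra constraint on $m$ and $r$. Since $n>n_e>\cdots>n_1$, the two smallest minimal generators of $T_n$ are still $n_1=m$ and $n_2=r$, whence $\mathrm{m}(T_n)=m$ and $\mathrm{r}(T_n)=r$ automatically. Thus $T_n\in\mathcal{MA}(m,r)$ holds if and only if $T_n$ is a MANS-semigroup with $\mathrm{e}(T_n)=e+1$, i.e.\ with $n$ a genuine new maximal generator.

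I would then invoke Theorem~\ref{thm-27} with $S$ in the role of the smaller semigroup $S'$ (which is a MANS-semigroup by hypothesis, so condition~1 there is satisfied) and with $n$ in the role of $n_{e+1}$. That theorem states that $T_n=\langle n_1,\ldots,n_e,n\rangle$ is a MANS-semigroup exactly when $n_e\bmod n_1<n\bmod n_1$ and $w(n\bmod n_1-1)<n<w(n\bmod n_1)$, where $\mathrm{Ap}(S,n_1)=\{w(0),\ldots,w(n_1-1)\}$. These are precisely conditions~2 and~3 in the definition of a suitably monotone element, while condition~1, namely $n_e<n$, is exactly what is encoded in $\mathrm{msg}(T)=\{n_1,\ldots,n_e,n\}$ with $n$ the maximal generator. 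The strict inequality $n<w(n\bmod n_1)$ also forces $n\notin S$, guaranteeing that $n$ is a new minimal generator and $\mathrm{e}(T_n)=e+1$, exactly as in the proof of Lemma~\ref{lem-24}.

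Combining these steps yields the equivalence: $T_n$ is a child of $S$ if and only if $n$ is suitably monotone for $S$. The direction producing a child from a suitably monotone $n$ is essentially Lemma~\ref{lem-24}, which already delivers a MANS-semigroup $T_n$ with $\mathrm{e}(T_n)=e+1$; one need only add the bookkeeping above to see that its multiplicity and ratio remain $m$ and $r$ and that deleting its maximal generator $n$ returns $S$. I do not expect a serious obstacle here: the only care required is matching the indices of Theorem~\ref{thm-27} to the present notation and confirming that the Ap\'ery set used in the definition of \emph{suitably monotone} is that of the parent $S$ (not of $T_n$), which is consistent since $S$ is exactly the vertex whose Ap\'ery set is already known when we enumerate its children.
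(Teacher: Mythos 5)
Your proposal is correct and follows essentially the same route as the paper, which deduces this theorem directly from Lemma~\ref{lem-24} (suitably monotone $\Rightarrow$ new MANS child) together with the tree structure of Proposition~\ref{prop-31}; your use of Theorem~\ref{thm-27} for the converse direction is exactly the intended mechanism, since that theorem packages Lemmas~\ref{lem-24} and~\ref{lem-25} into the needed equivalence. Your additional bookkeeping (that multiplicity and ratio are preserved, and that the Ap\'ery set in the definition of suitably monotone is that of the parent $S$) is accurate and fills in what the paper leaves implicit.
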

	
	Let us illustrate the above theorem with two examples. In both of them, the number above the arrows corresponds to the modulo, with respect to the multiplicity, of the new minimal generator.
	
	\begin{example}\label{exmp-33a}
		Let $m=5$ and $r=6$. Then the tree $G(\mathcal{MA}(5,6))$ is given by Figure~\ref{fig-01}.
		\begin{figure}[ht]
			\centering
			\begin{tikzpicture}[scale=1]
				
				\node at (0.5,1) {$\langle 5,6 \rangle$};
				
				\draw [<-] (0.9,1.2) -- (2.375,1.7);
				\draw [<-] (0.9,1) -- (2.325,1);
				\draw [<-] (0.9,0.8) -- (2.325,0.3);
				
				\node at (1.6,1.6) {\scriptsize 2};
				\node at (1.6,1.125) {\scriptsize 3};
				\node at (1.6,0.725) {\scriptsize 4};
				
				\node at (3,1.75) {$\langle 5,6,7 \rangle$};
				\node at (3,1) {$\langle 5,6,13 \rangle$};
				\node at (3,0.25) {$\langle 5,6,19 \rangle$};
				
				\draw [<-] (3.625,1.75) -- (4.95,1.75);
				\draw [<-] (3.675,1) -- (4.8,1);
				
				\node at (4.25,1.875) {\scriptsize 3};
				\node at (4.25,1.125) {\scriptsize 4};
				
				\node at (5.75,1.75) {$\langle 5,6,7,8 \rangle$};
				\node at (5.75,1) {$\langle 5,6,13,14 \rangle$};
				
				\draw [<-] (6.525,1.75) -- (7.55,1.75);
				
				\node at (7.1,1.875) {\scriptsize 4};
				
				\node at (8.5,1.75) {$\langle 5,6,7,8,9 \rangle$};
			\end{tikzpicture}
			\caption{$G(\mathcal{MA}(5,6))$}
			\label{fig-01}
		\end{figure}
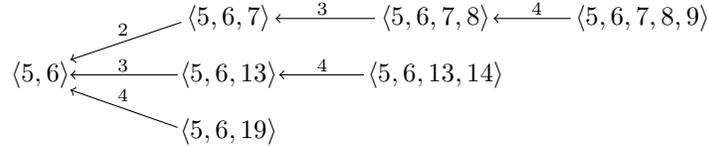
	\end{example}
	
	\begin{example}\label{exmp-33b}
		Let $m=5$ and $r=11$. Then the tree $G(\mathcal{MA}(5,6))$ is given by Figure~\ref{fig-02}.
		
		\begin{figure}[ht]
			\centering
			\begin{tikzpicture}[scale=1]
				
				\node at (0.5,4.5) {$\langle 5,11 \rangle$};
				
				\draw [<-] (0.9,4.75) -- (2.25,8.2);
				\draw [<-] (1,4.65) -- (2.25,5.6);
				\draw [<-] (1,4.45) -- (2.25,3.4);
				\draw [<-] (0.95,4.3) -- (2.25,2.15);
				\draw [<-] (0.9,4.15) -- (2.25,1.15);
				\draw [<-] (0.85,4) -- (2.25,0.4);
				
				\node at (1.55,6.8) {\scriptsize 2};
				\node at (1.55,5.25) {\scriptsize 2};
				\node at (1.55,4.175) {\scriptsize 3};
				\node at (1.55,3.5) {\scriptsize 3};
				\node at (1.55,2.95) {\scriptsize 4};
				\node at (1.55,2.45) {\scriptsize 4};

				\node at (3,8.325) {$\langle 5,11,12 \rangle$};
				\node at (3,5.75) {$\langle 5,11,17 \rangle$};
				\node at (3,3.25) {$\langle 5,11,23 \rangle$};
				\node at (3,2) {$\langle 5,11,28 \rangle$};
				\node at (3,1) {$\langle 5,11,34 \rangle$};
				\node at (3,0.25) {$\langle 5,11,39 \rangle$};
				
				\draw [<-] (3.75,8.45) -- (4.75,8.8);
				\draw [<-] (3.75,8.2) -- (4.75,7.85);
				\draw [<-] (3.75,5.875) -- (4.75,6.675);
				\draw [<-] (3.75,5.725) -- (4.75,5.4);
				\draw [<-] (3.75,5.575) -- (4.75,4.35);
				\draw [<-] (3.75,3.375) -- (4.75,3.5);
				\draw [<-] (3.75,3.125) -- (4.75,3.0);
				\draw [<-] (3.75,2.125) -- (4.75,2.25);
				\draw [<-] (3.75,1.875) -- (4.75,1.75);
				
				\node at (4.25,8.75) {\scriptsize 3};
				\node at (4.25,8.15) {\scriptsize 3};
				\node at (4.25,6.45) {\scriptsize 3};
				\node at (4.25,5.7) {\scriptsize 3};
				\node at (4.25,5.15) {\scriptsize 4};
				\node at (4.25,3.6) {\scriptsize 4};
				\node at (4.25,3.2) {\scriptsize 4};
				\node at (4.25,2.325) {\scriptsize 4};
				\node at (4.25,1.925) {\scriptsize 4};

				\node at (5.75,8.85) {$\langle 5,11,12,13 \rangle$};
				\node at (5.75,7.8) {$\langle 5,11,12,18 \rangle$};
				\node at (5.75,6.75) {$\langle 5,11,17,18 \rangle$};
				\node at (5.75,5.35) {$\langle 5,11,17,23 \rangle$};
				\node at (5.75,4.3) {$\langle 5,11,17,29 \rangle$};
				\node at (5.75,3.55) {$\langle 5,11,23,24 \rangle$};
				\node at (5.75,2.95) {$\langle 5,11,23,29 \rangle$};
				\node at (5.75,2.3) {$\langle 5,11,28,29 \rangle$};
				\node at (5.75,1.7) {$\langle 5,11,28,34 \rangle$};
				
				\draw [<-] (6.75,8.975) -- (7.5,9.15);
				\draw [<-] (6.75,8.725) -- (7.5,8.55);
				\draw [<-] (6.75,7.8) -- (7.5,7.8);
				\draw [<-] (6.75,6.875) -- (7.5,7.05);
				\draw [<-] (6.75,6.625) -- (7.5,6.45);
				\draw [<-] (6.75,5.475) -- (7.5,5.65);
				\draw [<-] (6.75,5.225) -- (7.5,5.05);
				
				\node at (7.15,9.2) {\scriptsize 4};
				\node at (7.15,8.75) {\scriptsize 4};
				\node at (7.15,7.925) {\scriptsize 4};
				\node at (7.15,7.1) {\scriptsize 4};
				\node at (7.15,6.675) {\scriptsize 4};
				\node at (7.15,5.7) {\scriptsize 4};
				\node at (7.15,5.25) {\scriptsize 4};
				
				\node at (8.75,9.2) {$\langle 5,11,12,13,14 \rangle$};
				\node at (8.75,8.5) {$\langle 5,11,12,13,19 \rangle$};
				\node at (8.75,7.8) {$\langle 5,11,12,18,19 \rangle$};
				\node at (8.75,7.1) {$\langle 5,11,17,18,19 \rangle$};
				\node at (8.75,6.4) {$\langle 5,11,17,18,24 \rangle$};
				\node at (8.75,5.7) {$\langle 5,11,17,23,24 \rangle$};
				\node at (8.75,5) {$\langle 5,11,17,23,29 \rangle$};
			\end{tikzpicture}
			\caption{$G(\mathcal{MA}(5,11))$}
			\label{fig-02}
		\end{figure}
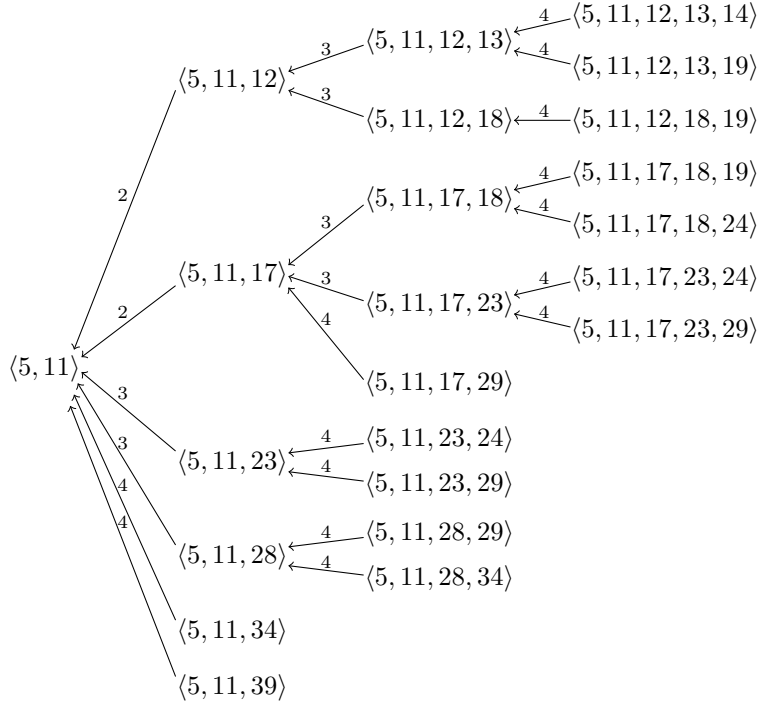
	\end{example}

	From two above examples, we observe that if $S$ is a MANS-semigroup such that $\mathrm{M}(S)\bmod{n_1}<n_1-1$, then $\mathrm{M}(S)+1$ is suitably monotone for $S$. As we show in the following result, this fact is not fortuitous.
	
	\begin{proposition}\label{prop-34}
		Let $S=\langle n_1<n_2<\cdots<n_e \rangle$ with $n_1\geq 3$ and $e\geq2$. If $S$ is a MANS-semigrupo such that $n_e\bmod{n_1}<n_1-1$, then $n_e+1$ is suitably monotone for $S$. 
	\end{proposition}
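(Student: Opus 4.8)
The plan is to check directly the three defining conditions that make $n := n_e+1$ suitably monotone for $S$. Put $t_e = n_e \bmod n_1$; since $n_e$ is a minimal generator greater than $n_1$ it is not a multiple of $n_1$, so $t_e \ge 1$, and the hypothesis gives $t_e \le n_1 - 2$. Condition~1 is trivial, and condition~2 is immediate as well: because $t_e + 1 \le n_1 - 1$ there is no reduction modulo $n_1$, whence $(n_e+1)\bmod n_1 = t_e+1 > t_e = n_e \bmod n_1$. For condition~3 I would first record that $n_e$, being a minimal generator exceeding $n_1$, belongs to $\mathrm{Ap}(S,n_1)$, so that $w(t_e) = n_e$; since $(n_e+1)\bmod n_1 = t_e+1$, condition~3 reads $w(t_e) = n_e < n_e+1 < w(t_e+1)$, and its left inequality is obvious.

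Thus everything reduces to the right inequality $n_e+1 < w(t_e+1)$. Here I would use monotonicity of the Ap\'ery set: $w(t_e+1) > w(t_e) = n_e$ forces $w(t_e+1) \ge n_e+1$, and since $w(t_e+1)\in S$ it is enough to prove that $n_e+1\notin S$, for then $w(t_e+1)\neq n_e+1$ and the inequality is strict. When $e = 2$ this last point is transparent from Proposition~\ref{prop-002}: there $w(t_e+1) = w(2) = 2n_2 > n_2+1 = n_e+1$ directly. So the real work is the case $e \ge 3$, and proving $n_e+1\notin S$ is the step I expect to be the main obstacle.

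To establish $n_e+1\notin S$ I would argue by contradiction, passing to the truncated semigroup $S'' = \langle n_1, n_2, \ldots, n_{e-1}\rangle$. Assume $n_e+1 = \sum_{j=1}^{e}\lambda_j n_j \in S$; if $\lambda_e \ge 1$ then subtracting one copy of $n_e$ exhibits $1$ as a nonnegative combination of the generators, impossible since $\mathrm{m}(S)=n_1\ge 3$, so $\lambda_e = 0$ and hence $n_e+1\in S''$. By Corollary~\ref{cor-26}, $S''$ is a MANS-semigroup (of embedding dimension $e-1 \ge 2$); write $\mathrm{Ap}(S'',n_1)=\{w''(0),\ldots,w''(n_1-1)\}$. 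On one hand $S''\subseteq S$ and $n_e = w(t_e)$ is the least element of $S$ in the class $t_e$, while $n_e\notin S''$, so the least element of $S''$ in that class satisfies $w''(t_e)\ge n_e+n_1$. On the other hand $n_e+1\in S''$ lies in the class $t_e+1$, so $w''(t_e+1)\le n_e+1$. Finally, because $S''$ is monotone and $1\le t_e < t_e+1 \le n_1-1$, we have $w''(t_e) < w''(t_e+1)$, and combining the three facts yields $n_e+n_1 \le w''(t_e) < w''(t_e+1)\le n_e+1$, i.e.\ $n_1\le 1$, a contradiction. The decisive point---and the only place the hypothesis $n_e\bmod n_1 < n_1-1$ enters---is that $t_e+1$ is still an admissible index of the strictly increasing chain for $S''$, which is precisely what permits the comparison $w''(t_e)<w''(t_e+1)$.
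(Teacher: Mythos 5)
Your proof is correct, and its first half coincides with the paper's: both arguments dispose of conditions~1 and~2 via the absence of wrap-around modulo $n_1$, identify $w(n_e \bmod n_1)=n_e$ to get the left inequality of condition~3, and use monotonicity to obtain the weak inequality $n_e+1 \leq w((n_e+1)\bmod n_1)$. Where you genuinely diverge is in ruling out equality. The paper stays inside $S$: assuming $n_e+1=w((n_e+1)\bmod n_1)$, it writes $n_e+1$ as a sum of two nonzero elements of $\mathrm{Ap}(S,n_1)$ (possible since $n_e+1$ would be an Ap\'ery element not in $\mathrm{msg}(S)$), and then uses the minimality of $n_e$ as a generator together with the monotone ordering to force the decomposition to be $n_e+1=w(1)+n_e$, absurd because $w(1)=n_2>n_1\geq 3$. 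You instead prove the (equivalent, given the weak inequality) statement $n_e+1\notin S$ by passing to the truncation $S''=\langle n_1,\ldots,n_{e-1}\rangle$: the coefficient of $n_e$ in any representation of $n_e+1$ must vanish because $1$ cannot be a nonnegative combination of integers $\geq 3$; then $S''$ is again a MANS-semigroup by Lemma~\ref{lem-25} (or Corollary~\ref{cor-26}), and the two bounds $w''(t_e)\geq n_e+n_1$ (since $n_e\notin S''$) and $w''(t_e+1)\leq n_e+1$ contradict the monotonicity $w''(t_e)<w''(t_e+1)$, the hypothesis $t_e<n_1-1$ guaranteeing that $t_e+1$ is still an admissible index. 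Your route reuses the structural truncation lemma and yields a clean, purely numerical contradiction, at the price of a case split ($e=2$ handled separately through Proposition~\ref{prop-002}, since the truncation needs $e\geq 3$ to remain a numerical semigroup with two generators); the paper's route is uniform in $e$ but its decomposition analysis is more delicate (and, as written, somewhat loose with the indices $i,j$). Both arguments are sound.
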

	
	\begin{proof}		
		Firstly, we observe that $n_e<n_e+1$ and, since $n_e\bmod{n_1}<n_1-1$, then $n_e\bmod{n_1} < n_e\bmod{n_1}+1 = (n_e+1)\bmod{n_1}$.
		
		Secondly, since $n_e\in\mathrm{msg}(S)$, if $\mathrm{Ap}(S,n_1)=\{w(0), w(1), \ldots, w(n_1-1)\}$, then $n_e=w(n_e\bmod{n_1})$ and, in consequence, $w((n_e+1)\bmod{n_1}-1)=w(n_e\bmod{n_1})=n_e<n_e+1$.
		
		At this moment, to prove that $n_e+1$ is suitably monotone for $S$, it remains to be seen that $n_e+1<w((n_e+1) \bmod{n_1})$. For this purpose, since $S$ is a MANS-semigroup, we note that $w(n_e\bmod{n_1}) < w((n_e+1)\bmod{n_1})$, that is, $n_e<w((n_e+1)\bmod{n_1})$. Therefore, $n_e+1 \leq w((n_e+1)\bmod{n_1})$.
		
		Suppose now that $n_e+1=w((n_e+1)\bmod{n_1})$. Then, we have that $n_e+1\in S$. Moreover, since  $n_e+1\not\in\mathrm{msg}(S)$, we can assert that there exist $i,j\in\{1,\ldots,e\}$ such that $i<j$ and $n_e+1=w(i)+w(j)$. Thus $n_e=w(i)+w(j-1)$ or $n_e=w(i-1)+w(j)$ (recall again that $n_e=w(n_e\bmod{n_1})$). However, since $n_e\in\mathrm{msg}(S)$, it is not possible that $n_e=w(i)+w(j-1)$. Furthermore, $n_e=w(i-1)+w(j)$ only if $i-1=0$ and $j=e$. In such a case, $n_e+1=w(i)+w(j)=w(1)+w(e)=w(1)+n_e$, which is a contradiction because $w(1)=n_2>n_1\geq3$. Thus, we conclude that $n_e+1<w((n_e+1)\bmod{n_1})$.
	\end{proof}
	
	An immediate consequence of Proposition~\ref{prop-34} is the following result.
	
	\begin{corollary}\label{cor-35}
		If $m\in\mathbb{N}\setminus\{0,1,2\}$ and $a\in\mathbb{N}\setminus\{0\}$, then the numerical semigroups $S_1=\langle m, am+1 \rangle$, $S_2=\langle m, am+1, am+2 \rangle$, \ldots, $S_{m-1}=\langle m, am+1, \ldots, am+(m-1) \rangle$ belong to the tree $G(\mathcal{MA}(m,am+1))$. Moreover, $S_{i+1}$ is a child of $S_i$ for $i\in\{1,2,\ldots,m-2\}$.
	\end{corollary}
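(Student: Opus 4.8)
The plan is to argue by induction on $i$, repeatedly feeding the conclusion of Proposition~\ref{prop-34} into Theorem~\ref{thm-32}. Concretely, I would show that for each $i\in\{1,\ldots,m-1\}$ the semigroup $S_i$ is a MANS-semigroup whose maximal minimal generator is $am+i$, and that, whenever $i\leq m-2$, the integer $am+(i+1)$ is suitably monotone for $S_i$; Theorem~\ref{thm-32} then identifies $S_{i+1}=\langle m,am+1,\ldots,am+(i+1)\rangle$ as a child of $S_i$. Since $m\geq 3$ and $a\geq 1$ force $m<am+1<am+2<\cdots$, every $S_i$ has multiplicity $m$ and ratio $am+1$, so all of them lie in $\mathcal{MA}(m,am+1)$.

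For the base of the induction I would use that $S_1=\langle m,am+1\rangle$ is a MANS-semigroup by Proposition~\ref{prop-002} and is exactly the root of $G(\mathcal{MA}(m,am+1))$ by Proposition~\ref{prop-31}; its maximal minimal generator is $am+1$, with $(am+1)\bmod m=1$.

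For the inductive step, assuming $S_i$ (with $1\leq i\leq m-2$) is a MANS-semigroup in the tree with $\mathrm{msg}(S_i)=\{m,am+1,\ldots,am+i\}$ and $\mathrm{M}(S_i)=am+i$, I would first record that $0<i<m$ gives $(am+i)\bmod m=i$, so the hypothesis $i\leq m-2$ becomes $\mathrm{M}(S_i)\bmod m=i<m-1$. As $m\geq 3$ and $\mathrm{e}(S_i)=i+1\geq 2$, Proposition~\ref{prop-34} then applies and shows that $\mathrm{M}(S_i)+1=am+(i+1)$ is suitably monotone for $S_i$. By Theorem~\ref{thm-32}, the semigroup $\langle m,am+1,\ldots,am+i,am+(i+1)\rangle=S_{i+1}$ is therefore a child of $S_i$; in particular $S_{i+1}$ is again a MANS-semigroup in the tree with maximal minimal generator $am+(i+1)$ and $(am+(i+1))\bmod m=i+1$, which closes the induction.

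I do not expect a genuine obstacle, since the statement is engineered as a direct application of Proposition~\ref{prop-34} and Theorem~\ref{thm-32}; the only point demanding care is the residue bookkeeping, namely checking at each stage that the maximal minimal generator $am+i$ has residue exactly $i$ modulo $m$, so that the condition $n_e\bmod n_1<n_1-1$ of Proposition~\ref{prop-34} reads precisely $i<m-1$. This also explains, rather than being a difficulty, why the chain terminates at $S_{m-1}$: for $i=m-1$ the residue equals $m-1=n_1-1$, the hypothesis of Proposition~\ref{prop-34} fails, and no further child of this particular form arises.
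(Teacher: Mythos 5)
Your proposal is correct and follows exactly the route the paper intends: the paper presents Corollary~\ref{cor-35} as an immediate consequence of Proposition~\ref{prop-34} (iterated via Theorem~\ref{thm-32}), and your induction on $i$, with the residue check $(am+i)\bmod m = i < m-1$ enabling each application of Proposition~\ref{prop-34}, is precisely that argument made explicit.
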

	
	To finish the subsection, we will see that it is possible to compute the number of children of a MANS-semigroup $S\in\mathcal{MA}(m,r)$ if we know its Frobenius number $\mathrm{F}(S)$. Indeed, if $\mathrm{Ap}(S,m) = \{w(0),w(1),\ldots,w(m-1)$, then $n\in\mathbb{N}$ could be suitably monotone for $S$ whenever $n\bmod{m}>\mathrm{M}(S)\bmod{m}$. Therefore, we will only find suitably monotone elements in intervals $(w(i),w(i+1))$ such that $\mathrm{M}(S)\leq i \leq m-2$ and $w(i+1)-w(i)>m$. Moreover, there will be $\frac{w(i+1)-w(i)-1}{m}$ suitably monotone elements in the interval $(w(i),w(i+1))$ (precisely, the numbers congruent to $w(i+1)$ modulo $m$). From here, the number of children will be given by the expression $\sum_{i=\mathrm{M}(S)}^{m-2}\frac{w(i+1)-w(i)-1}{m}$. From a simple computation, it follows the following result.
	
	\begin{proposition}\label{prop-36}
		A numerical semigroup $S\in\mathcal{MA}(m,r)$ has $\left\lfloor \frac{\mathrm{F}(S)-\mathrm{M}(S)}{m} \right\rfloor +1$ children in the tree $G(\mathcal{MA}(m,r))$.
	\end{proposition}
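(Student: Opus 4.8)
The plan is to use Theorem~\ref{thm-32}, which identifies the children of $S=\langle n_1<\cdots<n_e\rangle$ with the semigroups $T_n$ obtained by adjoining a suitably monotone element $n$; hence counting children amounts to counting the suitably monotone elements for $S$. Write $m=n_1$, let $\mathrm{Ap}(S,m)=\{w(0),w(1),\ldots,w(m-1)\}$ with $w(0)<w(1)<\cdots<w(m-1)$ (using that $S$ is a MANS-semigroup), and set $t_e=\mathrm{M}(S)\bmod m$. Since $\mathrm{M}(S)=n_e$ is a minimal generator greater than $m$, it is not a multiple of $m$ (otherwise $n_e\in\langle m\rangle$), so $t_e\in\{1,\ldots,m-1\}$; and as $n_e$ is the least element of $S$ in its residue class, $\mathrm{M}(S)=w(t_e)$.

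First I would fix a residue $j$ and count the suitably monotone elements $n$ with $n\bmod m=j$. By condition~2 of suitable monotonicity we need $j>t_e$, so $j\in\{t_e+1,\ldots,m-1\}$; by condition~3 we need $w(j-1)<n<w(j)$. Since $n\equiv j\equiv w(j)\pmod m$ and $n<w(j)$, necessarily $n=w(j)-km$ for some $k\geq 1$, and the requirement $n>w(j-1)$ becomes $km<w(j)-w(j-1)$. Here I would use the key arithmetic fact that $w(j)\equiv j$ and $w(j-1)\equiv j-1$ modulo $m$, so $w(j)-w(j-1)\equiv 1\pmod m$; writing $w(j)-w(j-1)=c_jm+1$ with $c_j\in\mathbb{N}$, the inequality $km<c_jm+1$ holds exactly for $k\in\{1,\ldots,c_j\}$, giving precisely $c_j=\frac{w(j)-w(j-1)-1}{m}$ candidates. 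Each such $n$ automatically satisfies condition~1, because $j-1\geq t_e$ forces $n>w(j-1)\geq w(t_e)=\mathrm{M}(S)$; thus these are exactly the suitably monotone elements of residue $j$.

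Summing over $j\in\{t_e+1,\ldots,m-1\}$ (equivalently over $i=j-1\in\{t_e,\ldots,m-2\}$), the number of children equals $\sum_{i=t_e}^{m-2}\frac{w(i+1)-w(i)-1}{m}$, which telescopes to $\frac{w(m-1)-w(t_e)}{m}-\frac{m-1-t_e}{m}$ (this expression also correctly yields $0$ in the degenerate case $t_e=m-1$, where no child exists). Now I would invoke Proposition~\ref{prop-08}, giving $w(m-1)=\max(\mathrm{Ap}(S,m))=\mathrm{F}(S)+m$, together with $w(t_e)=\mathrm{M}(S)$, to rewrite the count as $\frac{\mathrm{F}(S)-\mathrm{M}(S)+t_e+1}{m}$. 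The last step, which I expect to be the only genuinely delicate point, is the floor identity reconciling this with the stated formula: since $\mathrm{F}(S)=w(m-1)-m\equiv m-1\pmod m$ and $\mathrm{M}(S)\equiv t_e\pmod m$ with $t_e\in\{1,\ldots,m-1\}$, the residue of $\mathrm{F}(S)-\mathrm{M}(S)$ modulo $m$ is exactly $m-1-t_e\in\{0,\ldots,m-1\}$. Hence $\mathrm{F}(S)-\mathrm{M}(S)=m\left\lfloor\frac{\mathrm{F}(S)-\mathrm{M}(S)}{m}\right\rfloor+(m-1-t_e)$, and substituting this into the numerator gives $\mathrm{F}(S)-\mathrm{M}(S)+t_e+1=m\left(\left\lfloor\frac{\mathrm{F}(S)-\mathrm{M}(S)}{m}\right\rfloor+1\right)$, which after dividing by $m$ yields the claimed count $\left\lfloor\frac{\mathrm{F}(S)-\mathrm{M}(S)}{m}\right\rfloor+1$.
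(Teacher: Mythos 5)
Your proof is correct and follows essentially the same route as the paper: counting the suitably monotone elements residue by residue as the $\frac{w(i+1)-w(i)-1}{m}$ candidates in each interval $(w(i),w(i+1))$ for $i\geq \mathrm{M}(S)\bmod m$, summing, and simplifying via $w(m-1)=\mathrm{F}(S)+m$. The only difference is that you carry out in full the telescoping and floor-function computation that the paper dismisses as ``a simple computation,'' including the degenerate case $\mathrm{M}(S)\bmod m=m-1$.
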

	
	Let us see two illustrative examples of the above proposition.
	
	\begin{example}\label{exmpl-36a}
		It is well-known that $\mathrm{F}(\langle 5, 11 \rangle)=39$. Therefore, the number of children of $S=\langle 5, 11 \rangle$ is $\left\lfloor \frac{39-11}{5} \right\rfloor +1=6$ (in $G(\mathcal{MA}(5,11))$). Specifically, as Figura~\ref{fig-02} shows, $S$ has
		\begin{itemize}
			\item two children in $(w(1),w(2))=(11,22)$ (for $n=12$ and $n=17$),
			\item two children in $(w(2),w(3))=(22,33)$ (for $n=23$ and $n=28$),
			\item and two children in $(w(3),w(4))=(33,44)$ (for $n=34$ and $n=39$).
		\end{itemize}
		For those intervals, it is clear that $\frac{22-11-1}{5}=\frac{33-22-1}{5}=\frac{44-33-1}{5}=2$.
	\end{example}
	
	\begin{example}\label{exmpl-36b}
		Let $S$ be the numerical semigroup given by $\langle 7,15,16 \rangle$. Then $\mathrm{Ap}(S,7)=\{w(0)=0,w(1)=15,w(2)=16,w(3)=31,w(4)=32,w(5)=47,w(6)=48\}$ (that is, $S$ is a MANS-semigroup) and $\mathrm{F}(S)=41$. Thus, $S$ has $\left\lfloor \frac{41-16}{7} \right\rfloor +1=4$ children in $\mathcal{MA}(7,15)$. Indeed, $S$ has
		\begin{itemize}
			\item two children in $(w(2),w(3))=(16,31)$ (for $n=17$ and $n=24$),
			\item and two children in $(w(4),w(5))=(32,47)$ (for $n=33$ and $n=40$).
		\end{itemize}
	\end{example}

	\section*{Acknowledgements}
	
	Both authors are supported by Proyecto de Excelencia de la Junta de Andaluc\'{\i}a Grant Number ProyExcel\_00868 and by the Junta de Andaluc\'{\i}a Grant Number FQM-343.

\end{document}